\documentclass[11pt,letterpaper]{amsart}
\usepackage{amssymb}
\usepackage{mathrsfs, tikz}
\usepackage[all,cmtip]{xy}
\usepackage{pstricks}

\usepackage{ulem}
\usepackage{comment}

\usepackage{graphpap,color}

\definecolor{cof}{RGB}{219,144,71}
\definecolor{pur}{RGB}{186,146,162}
\definecolor{greeo}{RGB}{91,173,69}
\definecolor{greet}{RGB}{52,111,72}

\numberwithin{equation}{section}

\newtheorem{thm}{Theorem}[section]
\newtheorem{defn}[thm]{Definition}
\newtheorem{prop}[thm]{Proposition}
\newtheorem{prop-defn}[thm]{Proposition-Definition}

\newtheorem{lemma}[thm]{Lemma}
\newtheorem{cor}[thm]{Corollary}

\newcommand{\Id}{{\rm Id}}

\newcommand{\CD}{\xymatrix@R=1pc@C=1pc}
\newcommand{\CDR}{\xymatrix@R=1pc}
\newcommand{\CDC}{\xymatrix@C=1pc}

\def\cB{{\mathcal B}}

\def\cD{{\mathcal D}}

\def\cP{{\mathcal P}}

\def\cZ{{\mathcal Z}}

\def\sF{{\mathscr F}}

\def\sR{\mathscr{R}}
\def\sV{\mathscr{V}}
\def\tsV{\widetilde{\mathscr{V}}}

\def\fG{\mathfrak{G}}

\def\fL{\mathfrak{L}}

\def\fU{\mathfrak{U}}
\def\fV{\mathfrak{V}}

\def\fr{\mathfrak{r}}

\def\fr{\mathfrak{r}}
\def\fs{\mathfrak{s}}

\def\NN{{\mathbb N}}
\def\PP{{\mathbb P}}

\def\ZZ{{\mathbb Z}}

\def\AA{{\mathbb A}}

\def\Ga{{\Gamma}}

\def\vt{{\varTheta}}

\def\vt{{\vartheta}}

\def\si{{\sigma}}

\def\var{{\rm Var}}

\def\lex{{\rm lex}}

\def\lra{\longrightarrow}

\def\kk{{\bf k}}

\def\bU{{\bf U}}
\def\bV{{\bf V}}

\def\bz{{\bf z}}

\def\bm{{\bf m}}
\def\bbm{\bar{\bf m}}

\def\barT{\bar{T}}

\def\uu{{\underbar {\it u}}}
\def\uv{{\underbar {\it v}}}

\def\pl{{\hbox{Pl\"ucker}}}

 \def\2{{\rm I\!I}}

\def\-{{\setminus}}

\def\ve{{\varepsilon}}

\def\vp{{\varpi}}
\def\vr{{\varrho}}

\def\vi{{\varphi}}

\def\cBkd1{{\cB_{[k]}^{d_\vr=1}}}
\def\cBd1{{\cB^{d_\vr=1}}}

\def\modcBkd1{{\mod \; \cB_{[k]}^{d_\vr=1}}}

\def\Jac{{\rm Jac}}

\def\rk{{\rm rank \;}}

\def\ngv{{\rm ngv}}

\def\ngvr{{\rm ngv1}}
\def\ngvh{{\rm ngv2}}

\def\gov{{\rm gov}}

\def\mh{{\hbox{\scriptsize ${\rm mh}$}}}

\def\de{\delta}

\def\tsR{{\widetilde{\sR}}}

\def\up{{\Upsilon}}

\makeatletter
\newcommand*\bigcdot{\mathpalette\bigcdot@{.7}}
\newcommand*\bigcdot@[2]{\mathbin{\vcenter{\hbox{\scalebox{#2}{$\m@th#1\bullet$}}}}}
\makeatother

\makeatletter
\def\@tocline#1#2#3#4#5#6#7{\relax
  \ifnum #1>\c@tocdepth 
  \else
    \par \addpenalty\@secpenalty\addvspace{#2}%
    \begingroup \hyphenpenalty\@M
    \@ifempty{#4}{%
      \@tempdima\csname r@tocindent\number#1\endcsname\relax
    }{%
      \@tempdima#4\relax
    }%
    \parindent\z@ \leftskip#3\relax \advance\leftskip\@tempdima\relax
    \rightskip\@pnumwidth plus4em \parfillskip-\@pnumwidth
    #5\leavevmode\hskip-\@tempdima
      \ifcase #1
       \or\or \hskip 1em \or \hskip 2em \else \hskip 3em \fi%
      #6\nobreak\relax
    \hfill\hbox to\@pnumwidth{\@tocpagenum{#7}}\par
    \nobreak
    \endgroup
  \fi}
\makeatother

\begin{document}

\title[Fundamental Lemma]
{Fundamental Lemmas in Universal Resolution}

\date{}
\author{Yi Hu}

\maketitle

\begin{abstract} 
We abstract and prove some fundamental lemmas in universal blowups.
 \end{abstract}

\maketitle



\section{Introduction}
\medskip

The results of this paper stand on their own.

Moreover, they serve to validate \cite{Hu2025}.

As a special case of the present work, 
Chapters 5, 6 and 8 of \cite{Hu2025} follow automatically.

\section{The standardized models}

For any natural number  $n$, we let $[n]=\{1, \cdots, n\}$.

\subsection{The standardized models} $\ $

Let $\bU$ be the affine space 
$\AA^N$ with coordinates $(x_i)_{i \in [N]}$.

\begin{defn}\label{stand-models}
Consider the closed subset $\bV \subset \bU$ defined by
\begin{equation}\label{pseu-pl}
F_k = x_k + \sum_{s=1}^{\tau_k} \bm_{(k,s)}, \;  k \in  [\up]
\end{equation}
for some natural numbers $\tau_k$ and $\up$,
where $\bm_{(k,s)}$ are monomials, such that
\begin{enumerate}
\item {\rm (existence of leading variable)}
$x_k \nmid \bm_{(k,s)}$ for any $s \in [\tau_k]$;

There exists $0<\Xi <\up$ such that 

\item {\rm (pleasantness)} if  $x_k \mid \bm_{(k',t)}$
 for some $k, k' \in[\up]$ and $t \in [\tau_{k'}]$, then  
$k\in [\Xi], k' >\Xi$ and
$x_j \nmid \bm_{(k',t)}$ for all $j \ne k$.
\end{enumerate}
We call the above a standardized model of the closed subset $V$.
\end{defn}
Clearly, $\bV$ is isomorphic to the affine space $\AA^{N-\up}$ with coordinates $(x_i)_{i > \up}$. 
Let $\Ga \subset \{x_i\}_{i \in [N]}$ and $Z_\Ga \subset
\bV$ be defined by $x_i =0, i \in [N]$. Then, the strata
$Z_\Ga$ can admit arbitrary singularities, up to smooth
morphisms, according to the lemmas below.

\begin{prop}\label{Gr-is-Smodel}
A chart of a Grassmannian 
with the primary $\pl$ relations\footnote{
For a quick reference, see  Theorem 1.4 with 
$\Ga=\emptyset$, \cite{Hu2026}; for details,
see Chapter 3, \cite{Hu2025}.} 
is a  standardized model.
\end{prop}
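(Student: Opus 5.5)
We work on the standard affine chart of the Grassmannian $\mathrm{Gr}(d,n)$ in its Pl\"ucker embedding where a fixed Pl\"ucker coordinate $p_{\uu}$, with $\uu=(u_1<\cdots<u_d)$, does not vanish. Set $p_{\uu}=1$, and take as coordinates on the ambient affine space $\bU$ all the remaining Pl\"ucker coordinates $x_{\uv}=p_{\uv}$, $\uv\ne\uu$. Index the $\uv$ by $|\uv\setminus\uu|$. Those with $|\uv\setminus\uu|=1$, i.e. $\uv=(\uu\setminus\{u_a\})\cup\{b\}$ with $b\notin\uu$, are the free variables: they are the entries of the $d\times(n-d)$ matrix parametrizing the chart. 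Every $\uv$ with $|\uv\setminus\uu|\ge2$ is a dependent variable. The primary Pl\"ucker relations, one for each such $\uv$, are the quadratic relations obtained from the expansion of $p_{\uv}$ along one row or column of the corresponding minor. Concretely, choose $i\in\uv\setminus\uu$ and $j\in\uu\setminus\uv$ (a fixed choice, say the largest). The relation is the three-term or multi-term Pl\"ucker relation
\[
p_{\uu}\,p_{\uv}\;=\;\sum_{k}\pm\, p_{\uu\setminus\{j\}\cup\{\cdot\}}\;p_{\uv\setminus\{\cdot\}\cup\{j\}},
\]
and after setting $p_{\uu}=1$ it reads $F_{\uv}=x_{\uv}+\sum_s \bm_{(\uv,s)}$, where each $\bm_{(\uv,s)}$ is $\pm$ a product $x_{\uv'}x_{\uv''}$. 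One of the two factors has $|\uv'\setminus\uu|=1$, and the other has $|\uv''\setminus\uu|=|\uv\setminus\uu|-1$. I would take these relations exactly as stated in the cited reference (Theorem 1.4 with $\Ga=\emptyset$ of \cite{Hu2026}) and only verify the combinatorial conditions of Definition \ref{stand-models}.

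\emph{Existence of a leading variable.} Each monomial $\bm_{(\uv,s)}$ involves Pl\"ucker coordinates whose index differs from $\uu$ in strictly fewer places than $\uv$ does. Hence $x_{\uv}\nmid\bm_{(\uv,s)}$, and condition (1) holds.

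\emph{Ordering and pleasantness.} Enumerate the dependent variables $x_{\uv}$ as $x_1,\dots,x_\up$, increasing in $|\uv\setminus\uu|$, and place the free variables after them. Condition (2) says the relations are linear in dependent variables: a dependent variable occurring in a monomial occurs only in relations of larger index, and it is the only dependent variable in that monomial. In the expansion above the factor $x_{\uv'}$ with $|\uv'\setminus\uu|=1$ is free. The other factor $x_{\uv''}$ is dependent only if $|\uv\setminus\uu|\ge3$, and it has strictly smaller level. So each monomial contains at most one dependent variable, and that variable has a smaller index than the leading one.

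The main obstacle is the specific cut-off $\Xi$ in (2): it requires every dependent variable that appears in some monomial to lie in $[\Xi]$, and every relation containing one to have $k'>\Xi$. This forces a two-level structure. The naive level ordering gives this only when the levels are $2$ and $3$, so in general it fails. To handle this, I would use the form of the primary relations in the cited reference, which expresses every $x_{\uv}$ in terms of level-one coordinates together with the single level-two coordinate. If that is unavailable, I would reinterpret the statement so that $\Xi$ separates the variables that occur in monomials from the relations in which they occur, treating the levels as nested. I would then check that the resulting $\bV$ is still cut out by the same ideal. That check uses the fact that, in the chart, the successive substitutions are triangular, so the variety is a graph over the free variables and is isomorphic to $\AA^{N-\up}$, as noted after Definition \ref{stand-models}.
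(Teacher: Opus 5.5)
Your check of condition (1) is correct, but condition (2), which is the only nontrivial part of the proposition, is left unproved. What you actually establish is triangularity: each monomial of $F_{\uv}$ contains at most one dependent coordinate, to the first power, and of strictly smaller level. Definition \ref{stand-models} asks for more. With a single cutoff $\Xi$, no dependent coordinate may both occur in some monomial and have its own relation contain a monomial with a dependent coordinate.

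As you observe, this fails as soon as level four exists. It fails for \emph{every} choice of quadratic relations, not just for your ordering. By torus-weight homogeneity, each other term $p_Ap_B$ of a quadratic relation containing $p_{\uu}p_{\uv}$ has levels summing to $|\uv\setminus\uu|$. So for level four, each monomial contains either two dependent coordinates or a level-three coordinate, and the relation of a level-three coordinate contains a level-two one.

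Neither of your fallbacks repairs this.
\begin{itemize}
\item The cited primary relations are exactly the Laplace-type relations you wrote down. There is no form using only level-one coordinates and ``the single level-two coordinate''.
\item Replacing Definition \ref{stand-models} by a nested-levels condition proves a different statement.
\item The fact that $\bV\cong\AA^{N-\up}$ says nothing about (1)--(2), which are conditions on the presentation, not on the variety.
\end{itemize}

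To close the argument, restrict to the Grassmannian that actually feeds Proposition \ref{La-Mnev}. Lafforgue's version of Mn\"ev universality works on $\mathrm{Gr}(3,n)$, where the levels are only $1,2,3$, and there your own analysis finishes the proof:
\begin{itemize}
\item Index the level-two coordinates as $x_1,\dots,x_\Xi$, the level-three ones as $x_{\Xi+1},\dots,x_\up$, and the free ones after them.
\item A level-two relation has the form $x_{\uv}\pm x\,x'\pm x''x'''$ with all four factors free.
\item Each monomial of a level-three relation is a free coordinate times a single level-two coordinate.
\end{itemize}
Hence (2) holds, and $0<\Xi<\up$ once $n\ge 6$. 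This is what the paper's one-line ``check from the definition'' amounts to. For $\min(d,n-d)\ge 4$ you should state outright that the claim, checked against Definition \ref{stand-models} as written, fails for the primary relations, rather than leave that case open with speculative fixes.
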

\begin{proof} This can be checked easily from definition 
(see Chapter 3, \cite{Hu2025}).
\end{proof}

\begin{prop}\label{La-Mnev}
{\rm (Lafforgue's version
of Mn\"ev's universality. cf. Theorem 1.2 of \cite{Hu2026}.)} For any affine variety $X$ defined over 
$\ZZ$, there exists $\Ga$, an open susbet $Z^\circ_\Ga$ of $Z_\Ga$, and a smooth morphism
$Z^\circ_\Ga \lra X$
\end{prop}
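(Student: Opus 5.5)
This is Lafforgue's form of Mnëv universality, so the plan is to reduce to known results rather than build the morphism from scratch. By Proposition \ref{Gr-is-Smodel}, a chart of the Grassmannian $\mathrm{Gr}(3,E)$ (or $\mathrm{Gr}(d,E)$ in general), cut out by the primary $\pl$ relations, is a standardized model $\bV$. In it the coordinates $x_i$ are exactly the $\pl$ coordinates $p_{\uu}$. A subset $\Ga$ of these coordinates defines the stratum $Z_\Ga\subset\bV$, consisting of the points of the chart where the $\pl$ coordinates in $\Ga$ vanish. These strata are intersections of the chart with Lafforgue's thin Schubert cells, or more precisely with their closures. So it suffices to realize $X$, up to a smooth morphism, as an open part of such a stratum.

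\emph{Step 1 (Mnëv's encoding).} Write $X=\mathrm{Spec}\,\ZZ[y_1,\dots,y_m]/(f_1,\dots,f_r)$. Introduce auxiliary variables so that every relation is of the form $y_a+y_b=y_c$ or $y_ay_b=y_c$, together with constants $0$ and $1$. The resulting scheme $X'$ is isomorphic to $X$, the extra variables being polynomial functions of the old ones. Following Mnëv and Lafforgue, each such elementary relation is realized by a von Staudt type configuration of points in $\PP^2$ on a fixed line. This produces a rank $3$ matroid $M$ on a set $E$ of size $n$ whose realization space is controlled by $X'$.

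\emph{Step 2 (realization space vs.\ stratum).} Fix a basis $\uu_0$ of $M$ and work in the chart $p_{\uu_0}\ne 0$ of $\mathrm{Gr}(3,n)$. Take $\Ga$ to be the set of nonbases of $M$. Let $Z^\circ_\Ga\subset Z_\Ga$ be the open subset where all $p_{\uu}$ with $\uu$ a basis of $M$ are nonzero. Then $Z^\circ_\Ga$ is exactly the thin Schubert cell of $M$ in this chart. The torus $\mathbb{G}_m^{n}/\mathbb{G}_m$ acts freely on it by rescaling the vectors. The quotient is the realization space $\mathcal{R}(M)$, and after normalizing a projective frame among the points, $\mathcal{R}(M)$ is identified with an open subset of $X'$. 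The open conditions are the nonvanishing needed for the constructed points to be distinct and non-collinear when required. Lafforgue arranges the construction so that this open set is all of $X'$, or so that the statement only needs an open piece mapping smoothly to $X$.

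\emph{Step 3 (smoothness).} The map $Z^\circ_\Ga\to X'\cong X$ factors as a torus torsor followed by a projection. After fixing the frame, the residual freedom is given by coordinates on an open subset of affine space, namely the free rescalings. Hence the map is smooth, being locally a product with an open subset of affine space over the base.

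The main obstacle I expect is Step 2 over $\ZZ$ and not just over a field. The realization space must be isomorphic as a scheme to $X$, not merely bijective on points, and the generic non-degeneracy conditions must not cut away part of $X$ in a way that destroys surjectivity. This is where the precise form of the Mnëv–Lafforgue construction, as cited from Theorem 1.2 of \cite{Hu2026}, is needed. I would invoke that theorem directly for this identification, and only check that its strata coincide with the $Z_\Ga$ defined here in the standardized model of Proposition \ref{Gr-is-Smodel}.
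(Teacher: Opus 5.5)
Your proposal follows the paper's route. The paper gives no argument beyond citing Theorem 1.2 of \cite{Hu2026} (Lafforgue's form of Mn\"ev universality), and your proof rests on that same citation, adding a useful check: for $\Ga$ the nonbases of the matroid, $Z^\circ_\Ga$ (the locus in $Z_\Ga$ where all basis coordinates are invertible) is the thin Schubert cell in the chart $p_{\uu_0}\ne 0$.

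One correction to your sketch. The von Staudt gadgets use auxiliary points whose positions stay free after a frame is fixed, so the torus quotient is an open subset of $X'\times\AA^m$ projecting onto $X'$, as in Lafforgue's formulation. It is not an open subset of $X'$ itself, and the affine factor does not come from the rescalings. Smoothness of $Z^\circ_\Ga\to X$ is unaffected, since the map is still a torus torsor followed by a smooth projection.
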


\subsection{Standard birational transforms} $\ $

For any $k \in  [\up]$, we let $\bm_{k,0}=x_k$, 
 we can express
$$F_k = \sum_{s=0}^{\tau_k} \bm_{(k,s)}.$$ 
We introduce the projective space
$$\PP_{F_k}, \; \hbox{equipped with the homogenous coordinates}\; [x_{(k,s)}]_{0 \le s \le \tau_k}.$$
We  then introduce the rational map 
$$\Theta: \bV  \lra  \prod_{k=1}^\up \PP_{F_k}$$
$$(x_i)_i \to  \prod_{k=1}^\up [\bm_{(k,s)}]_{0 \le s \le \tau_k},$$
 let $\sV$ be the closure of the graph of $\Theta$, and obtain the diagram
$$ \xymatrix{
\sV \ar[d] \ar @{^{(}->}[r]   \ar[d] \ar @{^{(}->}[r]  &
\sR:= \bU  \times \prod_{k=1}^\up \PP_{F_k} \ar[d] \\
\bV  \ar @{^{(}->}[r]    & \bU.}
$$
The left vertical morphism is birational.\footnote{Standard birational transform generalizes to familiies of relations,
e.g.,   using Gr\"obner bases. }

\begin{lemma}\label{universal-eqs}
The scheme $\sV$ as a closed subset of $\sR$ is defined by 
\begin{itemize}
\item{\rm GL} {\rm  (Governing Linearized Relations)}
$$L_{F_k}: \sum_{s =0}^{\tau_k}  x_{(k, s)}, 
\; \forall \; k \in [\up];$$
\item{\rm GB} {\rm (Governing Binomials)}
\\[-2em]
\begin{eqnarray}\label{GovB}\nonumber
B_{(k,s)}: 
x_kx_{(k,s)} - \bm_{(k,s)} x_{(k,0)}, \; s \in [\tau_k], \; k \in [\up]
\end{eqnarray}
\\[-3em]
\item{\rm NGB1}   {\rm (Non-governing Binomials, I)}
\\[-2em]
\begin{eqnarray}\label{nGovB}\nonumber
B_{k; (s,t)}: 
\bm_{(k,t)} x_{(k,s)} - \bm_{(k,s)} x_{(k,t)}, \; s \ne t \in [\tau_k], \; k \in [\up].
\end{eqnarray}
\\[-3em]
\item{\rm NGB2} {\rm (Non-governing Binomials, II)} 
This  subset  consists of all relations  of $\sV$ in $\sR$ that are
algebraically independent of relations in {\rm (GL), (GB)}, and {\rm (NGB1)}.
\end{itemize}
\end{lemma}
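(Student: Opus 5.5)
The plan is to show that the three explicit families (GL), (GB), (NGB1) lie in the ideal $I(\sV)$ of $\sV$ in $\sR$. Once that is done, the statement follows from how (NGB2) is defined: it consists of all remaining relations of $\sV$ that are algebraically independent of the first three families. So the union of the four families generates $I(\sV)$ (or cuts out $\sV$ set-theoretically) tautologically. The real content is therefore that (GL), (GB), (NGB1) vanish on $\sV$.

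\emph{Step 1 (a dense open set where $\Theta$ is defined).} By the remark after Definition \ref{stand-models}, $\bV\cong \AA^{N-\up}$ with free coordinates $(x_i)_{i>\up}$; in particular $\bV$ is irreducible, and so is the closure $\sV$ of the graph of $\Theta$. I would take $\bV^\circ\subset\bV$ to be the open subset where, for every $k\in[\up]$, the vector $(\bm_{(k,s)})_{0\le s\le\tau_k}$ is not identically zero. Condition (1) of Definition \ref{stand-models} makes $\bm_{(k,0)}=x_k$ a distinct leading term, so $x_k$ is not identically zero on $\bV$ unless every $\bm_{(k,s)}$ vanishes there. This is the degenerate case I would need to treat or exclude separately. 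On $\bV^\circ$ the map $\Theta$ is a morphism, and the graph $\Ga_\Theta$ is dense in $\sV$.

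\emph{Step 2 (vanishing on the graph).} At a point of $\Ga_\Theta$ over $x\in\bV^\circ$ we have $x_{(k,s)}=\lambda_k\,\bm_{(k,s)}(x)$ for some scalar $\lambda_k\neq 0$. Then:
\begin{itemize}
\item $L_{F_k}=\lambda_k F_k(x)=0$, since $x\in\bV$;
\item $B_{(k,s)}=\lambda_k\bigl(x_k\bm_{(k,s)}-\bm_{(k,s)}x_k\bigr)=0$;
\item $B_{k;(s,t)}=\lambda_k\bigl(\bm_{(k,t)}\bm_{(k,s)}-\bm_{(k,s)}\bm_{(k,t)}\bigr)=0$.
\end{itemize}
All three are bihomogeneous in the coordinates of each factor $\PP_{F_k}$, so their vanishing is well defined. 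Since they vanish on the dense subset $\Ga_\Theta$, they vanish on its closure $\sV$. As a consistency check, the identity $x_kL_{F_k}-\sum_{s\ge1}B_{(k,s)}=x_{(k,0)}F_k$ shows that the defining equations of $\bV$ are recovered wherever $x_{(k,0)}\ne0$. Elsewhere, any remaining relation pulled back from $\bV$ is absorbed into (NGB2) by its definition.

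\emph{Step 3 (completeness and the main obstacle).} Let $J$ be the ideal generated by (GL), (GB), (NGB1). Add to $J$ a maximal set of relations of $\sV$ that are algebraically independent of $J$, namely (NGB2). The resulting ideal then has the same zero locus as $I(\sV)$, and with the natural reading of the definition it equals $I(\sV)$. The step I expect to require care is making ``algebraically independent'' precise, so that the statement is not vacuous yet still correct. I would interpret it as ``not contained in the ideal generated by the previous families'' and argue by induction on adjoined generators, using Noetherianity of $\sR$ so that finitely many suffice. I would also need to check the degenerate case from Step 1, where some $\bm_{(k,\cdot)}$ vanish identically on $\bV$, so that $\Theta$ is not defined generically in that factor.
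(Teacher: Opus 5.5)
Your proposal is correct and follows the paper's argument. The paper identifies $\sV$ with the zero locus of the multihomogeneous kernel of the homomorphism $\varphi$ sending $x_{(k,s)}\mapsto \bm_{(k,s)}$ into $R_0/\langle F_k\rangle$, checks by direct substitution that (GL), (GB), (NGB1) lie in that kernel, and lets (NGB2) absorb everything else by definition; your evaluation at $x_{(k,s)}=\lambda_k\bm_{(k,s)}(x)$ on the graph is the pointwise form of that substitution, and phrasing it through $\ker\varphi$ removes the need to exhibit a dense open set where $\Theta$ is defined, so the degenerate case you flag needs no separate treatment.
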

\begin{proof}  The birational map $\Theta$ corresponds to the following homomorphism
\begin{equation}\label{vi}
\vi: R_0 \otimes \bigotimes_{F \in \sF} \kk[x_{(k,s)}]_{0 \le s \le \tau_k}
 \lra R_0/\langle F_k \mid k \in [\up] \rangle
\end{equation}
$$ \vi|_{R_0}=\Id_{R_0}, \;  x_{(k,s)} \to \bm_{(k,s)}$$
where $R_0=\kk[x_j]_j$ is the affine algebra of $\bU$,
 and $\kk[x_{(k,s)}]_{0 \le s \le \tau_k}$ is graded. Then, $\sV \subset \sR$ is defined by
the multi-homogeneous kernel $\ker^\mh (\vi)$. 

One checks directly that relations in (GL), (GB), and (NGB1) belong to $\ker^\mh (\vi)$.
\end{proof}

We let 
\begin{itemize}
\item $\sF=\{F_1, \cdots, F_\up\}$,
 $L_\sF=\{L_F \mid F \in \sF\}$;
\item $\cB^\gov$ be the set of all 
governing binomials in (GB);
\item 
 $\cB^\ngvr$ be the set of all binomials in (NGB1), and
\item $\cB^\ngvh$ be the set of all binomials in  (NGB2). 
(We can require $\cB^\ngvh$ to be finite.)
\item We set $\cB^\ngv=\cB^\ngvr \sqcup \cB^\ngvh$.
\end{itemize}

For any $F_k \in \sF$, we let $\cB^\gov_{F_k}$ or
just $ \cB^\gov_k$ be  consist of all the governing binomials associated with $F_k$ (those as written  in Lemma \ref{universal-eqs} (GB))
and  $$\fG_{F_k}=\{L_{F_k}\} \sqcup \cB^\gov_{F_k}.$$
Then we obtain  the set of  all governing relations
$$\fG=\{\fG_{F_1} < \cdots < \fG_{F_\up}\}.$$ 

Note that any binomial $B_{k; (s,t)}$ of $\cB^\ngvr$ 
is associated with $F_k$, 
and we let $\cB^\ngvr_{F_k}$ be  consist of all the non-governing binomials associated with $F_k$ 

\begin{defn}
The smooth scheme $\sR$ comes equipped with two kinds of variables
\begin{itemize}
\item we call $x_i$ a $\vp$-variable;
\item  we call $x_{(k,s)}$ a $\vr$-variable.
\end{itemize}
\end{defn}

\begin{prop-defn}\label{pleas} For any $F_k \in \sF$, the  variable $x_k$ and the $\vr$-variables of 
$\cB^\gov_{F_k}$ do not appear in any relations of $\fG_{F_j}$ 
where $j <k$.
Such a variable is called pleasant.
\end{prop-defn}

\section{Universal blowups}\label{uni-blowups}

\subsection{Outlines of the universal blowups}

\begin{defn} For any $j \in [N]$, we let 
$$X_j=(x_j =0) \subset \sR, $$
called a $\vp$-divisor. For any coordinate $x_{(k,s)}$ of $\PP_{F_k}$ for some $k \in [\up]$,
we let 
$$X_{(k,s)}=(x_{(k,s)}=0) \subset \sR,$$
called $\vr$-divisor. For any $F \in \sF$, we let 
$$D_F=(L_F =0) \subset \sR,$$  called a $\fL$-divisor. 
\end{defn}

We let $\cD_{\sR,\vp}$ be the set of all $\vp$-divisors, 
$\cD_{\sR, \vr}$ the set of all $\vr$-divisors, and 
$$\cD_\sR=\cD_{\sR, \vp} \sqcup \cD_{\sR,\vr}.$$ 

\begin{defn}\label{order-0}
We provide a total order on the set $\cD_\sR$ as follows.
\begin{itemize}
\item Suppose both $i, j \in [\up]$ or both $i, j > \up$. Then, $X_i < X_j$ if $i < j$;
\item $X_i < X_j$ if $i > [\up]$ and $ j \in [\up]$;
\item $X_{(k',s')} <X_{(k,s)}$ if $(k',s') <_\lex (k,s)$;
\item $X_j < X_{(k,s)}$ for all $j$ and all $(k,s)$.
\end{itemize}
Here $<_\lex$ denotes the lexicographic order.
\end{defn}

We inductively construct our universal blowups, 
block by block, according to the order
$$\fG_{F_1}< \fG_{F_2}< \cdots < \fG_{F_\up}.$$

\begin{defn}\label{defn-divisors}
For any universal blowup $\tsR_* \lra \tsR_{*-1}$, inductively we let
$$\cD_{\sR_*} = \widehat\cD_{\sR_{*-1}} \sqcup E_*$$
where $\widehat \cD_{\sR_{*-1}}$ consists of proper transforms of divisors of $ \cD_{\sR_{*-1}} $
and $E_*$ is the exceptional divisor of the blowup $\tsR_* \lra \tsR_{*-1}$.
\end{defn}

\begin{defn}\label{defn-divisor-order}
 We inductively introduce a total order on the set $\cD_{\sR_*}$.
The base case $\cD_\sR$ is ordered in Definition \ref{order-0}. 
Assume that $\cD_{\sR_{*-1}}$  is totally ordered.
Then  the set $\cD_{\sR_*}$ is totally ordered by 
\begin{itemize}
\item we let $\widehat\cD_{\sR_{*-1}}$ inherit the order of $\cD_{\sR_{*-1}}$;
\item $E_* < D$ for any $D \in  \widehat\cD_{\sR_{*-1}}$.
\end{itemize}
\end{defn}

\begin{defn}\label{defn-divisor-order2}
 We let $\cD_{\sR_*} \times \cD_{\sR_*}$ be ordered lexicographically, using the order
of $\cD_{\sR_*}$.
\end{defn}

For any governing binomial
$B=x_k x_{(k,s)} -\bm_{(k,s)}x_{(k,0)}$ 
with $k \in [\up]$ and $s \in [\tau_k]$, we let
\begin{equation}\label{TBpm}
T_B^+ = x_k x_{(k,s)}\;\; \hbox{and} \;\;
 T_B^-=\bm_{(k,s)}x_{(k,0)}, 
\end{equation}
called plus-term and minus-term of $B$, respectively.
The  center of  a  $\vt$- or  $\wp$-blowup $\sR_{*} \lra \sR_{*-1}$ will be constructed from
  the proper transform of 
$T_B^\pm$ for some  $B\in \cB^\gov$.

Inductively, we let $\tsV_* \subset \tsR_*$
be the proper transform of  $\tsV_{*-1}$ 
($\subset \tsR_{*-1}$), with
the base case being $\sV \subset \sR$.
We cover $\tsV_{*-1}$ 
by smooth charts $\{\fV'\}$ of $\tsR_{*-1}$.

\begin{defn}\label{defn-p/t}
For any chart $\fV'$, we write the proper transforms of
$$T_B^\pm, \; L_F, \; x_j, \; x_{(j, s)}, \; X_{(j,s)}, 
\; \hbox{etc.}, $$
over  the chart $\fV'$, as
  $$T_{\fV', B}^\pm, \; L_{\fV',F}, \; x_{\fV', j}, \; x_{\fV', (j,s)}, \; X_{\fV',(j,s)},  \; \hbox{ ect.}.$$ 
\end{defn}

\begin{defn} 
For any chart $\fV'$ of $\sR_{*-1}$, 
we consider  all possible 
factor $y_\pm$ in  the proper transform $T_{\fV', B}^\pm$ over $\fV'$.
Each of $(y_\pm=0)$  corresponds to  global divisor $D^\pm \in \cD_{\sR_{*-1}}$.
Then,  a universal $\vt$- or $\wp$-blowup 
$$\pi: \tsR_{*-1} \lra \sR_*$$ is the blowup of $\tsR_{*-1}$ along $D^+ \cap D^-$.
In particular, locally over $\fV'$, 
$$\pi^{-1}(\fV') \lra \fV'$$ is  the blowup of $\fV'$ along $(y_+=0)\cap (y_-=0).$
\end{defn}
Universal $\vt$-blowups are 
defined in \S \ref{vt-blowups} 
and universal $\wp$-blowups are defined 
in \S \ref{wp-blowups}.  
Universal $\ell$-blowups are
in different flavour and are introduced in \S \ref{ell-blowups}.

\subsection{Introducing and on $\vt$-blowups} \label{vt-blowups} $\ $

Fix any  $k \in [\up]$ and consider all $B=T_B^+-T_B^- \in \cB^\gov_{F_k}$. We have
$$T_B^+ = x_k x_{(k,s)}, \;  T_B^-=\bm_{(k,s)}x_{(k,0)}, \;\;
s \in [\tau_k].$$
Observe that the pair $(x_k, x_{(k,0)})$ appears in every member of $\cB^\gov_{F_k}$.
Thereby we  let 
$$Z_{\vt_{F_k}}  = X_k \cap X_{(k,0)}.$$
Then we have
$$Z_{\vt_{F_1}} < Z_{\vt_{F_2}} < \cdots < Z_{\vt_{F_\up}},$$
and we blow up $\sR$ along these centers, in that order,  to obtain
$$\tsR_\vt:=\tsR_{\vt_{F_\up}} \lra \cdots \lra \tsR_{\vt_{F_1}} \lra \sR.$$
It induces
$$\tsV_\vt:=\tsV_{\vt_{F_\up}} \lra \cdots \lra \tsV_{\vt_{F_1}} \lra \sV$$
where  $\tsV_{\vt_{F_k}}$ is the proper transform of $\tsV$. 

Each blowup $\tsR_{\vt_{F_j}} \lra \tsR_{\vt_{F_{j-1}}}$, where $\tsR_{\vt_{F_0}} =\sR$,
 creates an exceptional divisor $E_{\vt_{F_j}}$ 
of  $\tsR_{\vt_{F_j}}$,
and we let $E_{\vt_k, F_j}$ be the proper transform of  $E_{\vt_{F_j}}$ in $\tsR_{\vt_k}$.

\begin{lemma}\label{vtk-blowup}  
The scheme $\tsR_{\vt_k}$ is smooth for every $k \in [\up]$. We have  
 \begin{equation}\label{vtk-adhoc}
\tsV_{\vt_k} \cap X_{\vt, (k,0)} = \emptyset, \;\;
\hbox{ for all $k \in [\up]$} .
\end{equation}
We can cover 
$\tsV_{\vt_k}$  by 
smooth charts $\{\fV\}$ of  $\tsR_{\vt_k}$
such that for any  $\fV$, the following hold.
\begin{enumerate}
\item For  any  binomial $B=B_{(j,s)}$ in $\cB^\gov_{F_j}$ with $j \in [\up]$ and $s \in [\tau_j]$, we have
$$T_{\fV, (j,s)}^+ =x_{\fV, j} x_{\fV, (j, s)}.$$
\item The proper transforms of all relations in $B_{i; (s,t)} \in \cB^\ngvr_{F_i}$ with $i \in [k]$
become dependent and can be discarded from consideration.
\item For any $j \in [\up]$, when $j>k$, or,
 when $j \in [k]$ and $E_{\vt_k, F_j} \cap \fV =\emptyset$,  
$$(\alpha) \;\;\;\; 
L_{\fV, F_j} = x_{\fV, (j,0)} + \sum_{s =1}^{\tau_j} x_{\fV, (j,s)};$$
when  $j \in [k]$ and $E_{\vt, F_j}\cap \fV \ne \emptyset$,
$$(\beta) \;\;\;\; 
L_{\fV, F_j} = \de_{\fV,(j,0)} + \sum_{s =1}^{\tau_j} x_{\fV, (j,s)}$$
where $\de_{\fV, (j,0)}$  is a coordinate of $\fV$ such that
$E_{\vt, F_j}\cap \fV =(\de_{\fV, (j,0)}=0)$.
\end{enumerate}
\end{lemma}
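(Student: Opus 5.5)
We argue by induction on $k$, the base case $k=0$ being $\sR$ itself. There every statement is immediate: (1) and (3)$(\alpha)$ hold by Lemma \ref{universal-eqs}, and (2) and $(\beta)$ are vacuous. Assume the statements hold for $\tsR_{\vt_{k-1}}$, with charts $\{\fV'\}$. The center of $\tsR_{\vt_k}\to\tsR_{\vt_{k-1}}$ is the proper transform of $X_k\cap X_{(k,0)}$. By Proposition-Definition \ref{pleas}, the variables $x_k$ and $x_{(k,0)}$ are pleasant: they do not occur in the relations of $\fG_{F_j}$ for $j<k$. Hence none of the earlier $\vt$-centers involve them, and over each $\fV'$ their proper transforms $x_{\fV',k}$ and $x_{\fV',(k,0)}$ remain independent coordinates of the smooth chart $\fV'$. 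The center is therefore locally $(x_{\fV',k}=x_{\fV',(k,0)}=0)$, a codimension-two smooth complete intersection, and the blowup is smooth. We cover it by the two standard affine charts:
\begin{itemize}
\item[(a)] $x_k=\de\, x'_{(k,0)}$ and $x_{(k,0)}=\de$;
\item[(b)] $x_{(k,0)}=\de\, x'_k$ and $x_k=\de$.
\end{itemize}
In each case $\de=\de_{\fV,(k,0)}$ defines $E_{\vt_{F_k}}$, and the remaining coordinates are unchanged.

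For \eqref{vtk-adhoc}, the proper transform of $X_{(k,0)}$ is empty in chart (a) and is $(x'_k=0)$ in chart (b). In chart (b) the governing binomial $B_{(k,s)}=x_kx_{(k,s)}-\bm_{(k,s)}x_{(k,0)}$ becomes $\de\,(x_{(k,s)}-\bm_{(k,s)}x'_k)$, so its proper transform is $x_{(k,s)}=\bm_{(k,s)}x'_k$. Note that $\bm_{(k,s)}$ is not divisible by $x_k$ or $x_{(k,0)}$; this uses the leading-variable condition and the pleasantness condition. On $x'_k=0$ these equations force $x_{(k,s)}=0$ for all $s\ge1$. The linear relation $L_{F_k}=\de x'_k+\sum_s x_{(k,s)}$ restricted there also loses $x_{(k,0)}$, so every homogeneous coordinate of $\PP_{F_k}$ vanishes, which is impossible. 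Hence $\tsV_{\vt_k}$ misses $X_{\vt,(k,0)}$. This lets us discard chart (b) when covering $\tsV_{\vt_k}$, and work only in chart (a) together with the charts disjoint from the center. Ensuring that the inductive hypothesis \eqref{vtk-adhoc} for $j<k$ is not disturbed is routine: later centers only involve pleasant variables.

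In chart (a), $B_{(k,s)}$ becomes $\de\,(x'_{(k,0)}x_{(k,s)}-\bm_{(k,s)})$. We rename $x'_{(k,0)}$ as the proper transform $x_{\fV,k}$. This matches (1) with $T^+=x_{\fV,k}x_{\fV,(k,s)}$, while the $T^+$ terms for $j\ne k$ are untouched since the variables involved are distinct. $L_{F_k}=x_{(k,0)}+\sum_s x_{(k,s)}$ becomes $\de+\sum_s x_{\fV,(k,s)}$, which gives $(\beta)$. For $j\neq k$, $L_{F_j}$ does not contain $x_k$ or $x_{(k,0)}$, so $(\alpha)$ and $(\beta)$ persist from the inductive hypothesis.

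For (2), the governing proper transforms give $\bm_{(k,s)}=x_{\fV,k}x_{\fV,(k,s)}$ in chart (a). Then
\[
\bm_{(k,t)}x_{(k,s)}-\bm_{(k,s)}x_{(k,t)} = x_{\fV,k}\bigl(x_{(k,t)}x_{(k,s)}-x_{(k,s)}x_{(k,t)}\bigr)
\]
holds modulo the governing relations, and this vanishes. So $B_{k;(s,t)}$ lies in the ideal generated by the transformed governing binomials and can be discarded; for $i<k$ this holds by induction.

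The main obstacle is making the chart bookkeeping precise. Specifically, we must check that the monomials $\bm_{(k,s)}$ remain honest monomials in chart coordinates after the earlier blowups, without acquiring factors of $\de_{(j,0)}$ that would spoil (1) or the dependency in (2). We handle this using pleasantness (2) of Definition \ref{stand-models}: a variable $x_j$ with $j\le\Xi$ appears in $\bm_{(k',t)}$ only for $k'>\Xi$, and alone among the leading variables. After the earlier blowups, any such occurrence is substituted by $\de_{(j,0)}x_{\fV,j}$, which is again a monomial, so the arguments above go through unchanged.
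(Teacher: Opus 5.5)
Your proposal is correct and follows essentially the same route as the paper. You blow up the smooth codimension-two center $x_k=x_{(k,0)}=0$. On the chart where $x_k$ becomes the exceptional coordinate (the paper's $(\xi\ne 0)$, your chart (b)), you use the proper transform of $B_{(k,s_0)}$ to see that $\tsV_{\vt_k}$ misses the proper transform of $X_{(k,0)}$, so that chart can be dropped. You then read off (1) and (3) on the remaining chart and obtain (2) from the syzygy $B_{k;(s,t)}=x_{(k,t)}B_{(k,s)}-x_{(k,s)}B_{(k,t)}$.

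One caveat applies equally to the paper's proof. Your closing claim that the argument for (2) goes through unchanged presumes the proper transform of $B_{k;(s,t)}$ equals its total transform. If some $x_j$ with $j<k$ divides both $\bm_{(k,s)}$ and $\bm_{(k,t)}$, then on charts meeting $E_{\vt,F_j}$ a power of $\de_{(j,0)}$ must be divided out. The syzygy then only places $\de_{(j,0)}^c B_{\fV,k;(s,t)}$ in the ideal of the governing binomials, not $B_{\fV,k;(s,t)}$ itself.
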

\begin{proof} 
We prove by induction on $k \in \{0\} \sqcup [\up]$. 
The  base case $k=0$ is for $\tsR:=\tsR_{\vt_0}$, in which case,  \eqref{vtk-adhoc} and (2) are void,
all other statements hold.

Fix $k \in [\up]$, we assume that all statements hold for  $``<k-1"$.

We let $\pi_k: \tsR_{\vt_k} \lra \tsR_{\vt_{k-1}}$
 be the blowup of $\tsR_{\vt_{k-1}}$ along the proper transform of
$Z_{\vt_{F_k}}  = X_k \cap X_{(k,0)}$, where  $\tsR_{\vt_0}=\tsR$.
Then, we have  $\tsR_{\vt_k} \subset \tsR_{\vt_{k-1}} \times \PP^1_{[\xi,\eta]}$.
We can assume $\tsR_{\vt_{k-1}}$ is covered by
 affine charts  $\{\fV'\}$  with a system  $\var_{\fV'}$ of affine coordinates for every chart $\fV'$.
Then over any fixed chart $\fV'$, the blowup center is
$$(x_{\fV', k}=0) \cap (x_{\fV', (k,0)}=0),$$
and $\pi_k^{-1}(\fV') \subset \fV'  \times \PP^1_{[\xi,\eta]}$ is defined by
$$\eta x_{\fV', k} - \xi x_{\fV', (k,0)}.$$

Clearly, the scheme $\tsR_{\vt_k}$ is smooth  because locally over any chart $\fV'$, 
it is the blowup of $\fV'$ along  the smooth center $x_{\fV',k} \cap x_{\fV', (k,0)}$.

The open subset $\pi_k^{-1}(\fV')$ is covered by two charts $(\xi \ne 0)$ and $(\eta \ne 0)$.

Over $(\xi \ne 0)$, we have
$$\eta x_{\fV', k} - x_{\fV', (k,0)}$$
where $x_{\fV', k}$ becomes the exceptional variable defining $E_{\vt_k} \cap \fV'$ and $\eta$
becomes  $x_{\fV, (k,0)}$, the proper transform of  $x_{\fV', (k,0)}$.

Over $(\eta \ne 0)$, we have
\begin{equation} \label{eta}
x_{\fV', k} - \xi x_{\fV', (k,0)}
\end{equation}
where $x_{\fV', (k,0)}$ becomes the exceptional variable defining $E_{\vt_k} \cap \fV$ and $\xi$
becomes  $x_{\fV, k}$, the proper transform of  $x_{\fV', k}$.

We claim that $(\eta \ne 0)$ covers $\pi_k^{-1}(\fV') \cap \tsV_{\vt_k}$. First, if $\fV'$ lies over 
the chart $(x_{ (k,0)} \ne 0)$, thus $\fV'$ is disjoint with the blowup center, hence the claim holds.
Next, suppose $\fV'$ lies over the chart $(x_{ (k,s_0)} \ne 0)$ for some $s_0 \in [\tau_k]$.
Then over the chart $(x_{ (k,s_0)} \ne 0)$, we have
$$B_{(k, s_0)}: x_k - m_{(k,s_0)} x_{(k,0)}.$$   Over the chart $(\xi \ne 0)$,  we can substitute 
$x_{\fV', (k,0)}=\eta x_{\fV', k}$ into (the proper transform of) the above displayed relation and obtain
$$B_{\fV, (k, s_0)}: 1 - m_{(k,s_0)} \eta,$$
thereby $\eta \ne 0$ on the chart, implying the claim.

In the statements of the lemma and in what follows, we cover $\tsV_{\vt_k}$ by the  charts $\{\fV\}$ of 
$\tsR_{\vt_k}$, either 
lying over $(x_{(k,0)} \ne 0)$ or defined by $(\eta \ne 0)$.

Now observe that over $(\eta \ne 0)$,
$x_{\fV', (k,0)}$ becomes the exceptional variable $\de_{\fV, (k,0)}$,
and also the blowups in $\tsR_{\vt_i}$ with $i \in [k]$
do not affect variables $x_{(s,t)}$ for $s, t \in [\tau_j]$ for any $j$
and  those in $L_{F_j}$ with $j>k$. This implies (1) as well as both (3) ($\alpha$) and (3) ($\beta$).

It remains to prove (2). 
Consider any $$B_{k; (s,t)}:  \bm_{(k,t)} x_{(k,s)} - \bm_{(k,s)} x_{(k,t)}, \; s \ne t \in [\tau_k], \; k \in [\up].$$
Over the chart $\fV$, lying over $(x_{(k,0)} \ne 0)$ or defined by $(\eta \ne 0)$, we have
$$B_{\fV, k; (s,t)}:  \bm_{\fV, (k,t)} x_{\fV,(k,s)} - \bm_{\fV, (k,s)} x_{\fV, (k,t)}, \; s \ne t \in [\tau_k], \; k \in [\up].$$
We also have $$B_{\fV, (k,s)} :  x_{\fV, k} x_{\fV, (k,s)}- \bm_{\fV, (k,s)}, \; 
B_{\fV, (k,t)} :  x_{\fV, k} x_{\fV, (k,t)}- \bm_{\fV, (k,t)}.$$
Then, one calculates and finds
$$B_{\fV, k; (s,t)}=  x_{\fV, (k,t)}B_{(k,s)} - x_{\fV, (k,s)}B_{(k,t)}.$$
Hence, (2) follows by induction.
\end{proof}

By the end of the sequential $\vt$-blowups, we obtain
$\tsV_\vt \subset \tsR_\vt$.

We let $E_{\vt, F_k}$ be the proper transform of  $E_{\vt_{F_k}}$ in $\tsR_{\vt}$.


\begin{cor}\label{vt-blowup}  
The scheme $\tsR_\vt$ is smooth.
We have $$\tsV_\vt \cap X_{\vt, (k,0)} = \emptyset, \;\;
\hbox{ for all $k \in [\up]$. } $$ Consequently, $\tsV_\vt$  can be covered by 
affine charts $\{\fV\}$ such that
\begin{enumerate}
\item For  any  binomial $B=B_{(k,s)}$ in $\cB^\gov_{F_k}$ for some $k \in [\up]$ and $s \in [\tau_k]$, we have
$T_{\fV, B}^+ =x_{\fV, k} x_{\fV, (k, s)}$,
keeping its original form.
\item The proper transforms of all relations in $\cB^\ngvr$
become dependent and can be discarded from consideration.
\item For any $k \in [\up]$, when  $E_{\vt, F_k} \cap \fV =\emptyset$  
$$(\alpha) \;\;\;\; 
L_{\fV, F_k} = x_{\fV, (k,0)} + \sum_{s =1}^{\tau_k} x_{\fV, (k,s)};$$
when $E_{\vt, F_k}\cap \fV \ne \emptyset$,
$$(\beta) \;\;\;\; 
L_{\fV, F_k} = \de_{\fV,(k,0)} + \sum_{s =1}^{\tau_k} x_{\fV, (k,s)}$$
where $\de_{\fV, (k,0)}$  is a coordinate of $\fV$ such that
$E_{\vt, F_k}\cap \fV =(\de_{\fV, (k,0)}=0)$.
\end{enumerate}
\end{cor}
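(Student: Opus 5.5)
The plan is to obtain the corollary as the terminal case $k=\up$ of Lemma \ref{vtk-blowup}. By definition $\tsR_\vt=\tsR_{\vt_{F_\up}}$ and $\tsV_\vt=\tsV_{\vt_{F_\up}}$. The proper transforms $E_{\vt, F_k}$ and $X_{\vt,(k,0)}$ in $\tsR_\vt$ are exactly the divisors $E_{\vt_\up, F_k}$ and $X_{\vt_\up,(k,0)}$ in the notation of that lemma. Smoothness of $\tsR_\vt$ is then the first assertion of Lemma \ref{vtk-blowup} with $k=\up$.

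For the emptiness statement, note one point. Lemma \ref{vtk-blowup} \eqref{vtk-adhoc}, read literally at level $k$, concerns $X_{\vt,(k,0)}$. What we need is that for every $k\le \up$ the divisor $X_{\vt_\up,(k,0)}$ misses $\tsV_{\vt_\up}$. I would argue this as follows.

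\begin{itemize}
\item At stage $k$, the proof of Lemma \ref{vtk-blowup} shows that $\tsV_{\vt_k}$ is covered by charts lying over $(x_{(k,0)}\neq 0)$ or contained in $(\eta\neq 0)$. On the latter, $x_{\fV',(k,0)}$ becomes the exceptional coordinate and the proper transform of $X_{(k,0)}$ is $(\eta=0)$, so $\tsV_{\vt_k}\cap X_{\vt_k,(k,0)}=\emptyset$.
\item The later blowups $\pi_j$ with $j>k$ are isomorphisms away from their centers. The proper transform of a divisor under a blowup lies in the preimage of that divisor. The proper transform of $\tsV$ lies in the preimage of the previous $\tsV$.
\item Hence $X_{\vt_j,(k,0)}\cap \tsV_{\vt_j}$ lies over $X_{\vt_{j-1},(k,0)}\cap\tsV_{\vt_{j-1}}$, which is empty by induction on $j$. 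This gives the claim for every $k$.
\end{itemize}

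For the chart statements I take the covering $\{\fV\}$ of $\tsV_{\vt_\up}$ furnished by Lemma \ref{vtk-blowup} with $k=\up$.

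\begin{itemize}
\item Item (1) is item (1) of the lemma verbatim.
\item Item (2) is item (2) of the lemma with $i\in[\up]$. Every member of $\cB^\ngvr$ is some $B_{i;(s,t)}$ with $i\in[\up]$, and on each chart it is the combination $x_{\fV,(i,t)}B_{(i,s)}-x_{\fV,(i,s)}B_{(i,t)}$ of governing binomials.
\item Item (3) is item (3) of the lemma restricted to $j\in[\up]$. Since $j>\up$ never occurs, the dichotomy is exactly between $E_{\vt,F_j}\cap\fV=\emptyset$, giving $(\alpha)$, and $E_{\vt,F_j}\cap\fV\neq\emptyset$, giving $(\beta)$.
\end{itemize}

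The only step needing care is the emptiness argument above, specifically that $X_{\vt,(k,0)}$ stays disjoint from $\tsV$ after all later blowups. The rest is a direct specialization of Lemma \ref{vtk-blowup}.
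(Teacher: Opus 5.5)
Your proposal is correct and follows the paper's route: the paper derives the corollary immediately from Lemma \ref{vtk-blowup} by specializing to $k=\up$. Your extra step is the containment $X_{\vt_j,(k,0)}\cap\tsV_{\vt_j}\subset\pi_j^{-1}\bigl(X_{\vt_{j-1},(k,0)}\cap\tsV_{\vt_{j-1}}\bigr)$, which shows that the disjointness obtained at stage $k$ survives the later $\vt$-blowups; it is valid and makes explicit a point the paper leaves implicit.
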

\begin{proof}
This follows immediately from Lemma \ref{vtk-blowup}.
\end{proof}

\subsection{Introducing $\wp$-blowups}\label{wp-blowups} $\ $

After obtaining $\tsR_\vt$,  
we continue to inductively construct our universal blowups, block by block, according to the order  
$\fG_{F_1}< \fG_{F_2}< \cdots < \fG_{F_\up}.$

As mentioned earlier, inductively, we let $\tsR_* \lra \tsR_{*-1}$ be an (intermediate)  blowup
and $\tsV_* \subset \tsR_*$ be the proper transform of $\tsV_{*-1} \subset \tsR_{*-1}$.

\begin{defn}\label{termi}
Let $\tsR_*$ be any (intermediate) blouwp scheme 
such that $\tsV_*$ is covered by smooth charts $\{\fV\}$ of $\tsR_*$. We say
$B \in \cB^\gov$ terminates on $\fV$ if the proper transform 
$T_{\fV, B}^\pm$ is invertible along $\fV \cap \tsV_*$. 
We say $B$ terminates on $\tsR_*$ if it terminates on all charts $\fV$.
\end{defn}

The goal is to make every $B \in \cB^\gov$ terminate, block by block.

\smallskip

Inductively, suppose we have constructed all the universal blowups with respect to 
the block $\fG_{k-1}$, starting from $\tsR_\vt$.
We write the last blowup scheme constructed as $\tsR_{\ell_{k-1}}$ with the 
set $\cD_{\tsR_{\ell_{k-1}}}$ of divisors (Definition \ref{defn-divisors}).
We then move on to consider all the universal blowups,
namely, all the $\wp$-blowups and the $\ell$-blowup with respect 
to $\fG_k$.

First, we introduce  $\wp$-blowups  with respect 
to $\fG_k$, this process is denoted by $(\wp_k)$.  

The intitial case $(\wp_0)$ is $\tsV_\vt \subset \tsR_\vt$.
The process  $(\wp_k)$ may contain a multiple rounds, each round is denoted
by  $(\wp_k\fr_\mu)$ for some natural number $\mu$, and in each round, 
we construct a finite sequential blowups, each of which is named as a step,
and is denoted by $(\wp_k\fr_\mu\fs_h)$ for some natural number $h$.

We construct $(\wp_k\fr_\mu)$ inductively on $\mu$.

\begin{defn}\label{ktaumuh} Suppose $(\wp_k\fr_{\mu-1})$ has been contructed for 
some natural number $\mu$, and we obtain $\tsV_{(\wp_k\fr_{\mu-1})} \subset
\tsR_{(\wp_k\fr_{\mu-1})}$ such that  $\tsV_{(\wp_k\fr_{\mu-1})}$ is covered by
smooth charts $\{\fV'\}$ of  $\tsR_{(\wp_k\fr_{\mu-1})}$. Further,  $\tsR_{(\wp_k\fr_{\mu-1})}$
comes equipped with  the set  $\cD_{(\wp_k\fr_{\mu-1})}$ of
$\vp$-, $\vr$-, and exceptional divisors, as well as the set $\fL_{(\wp_k\fr_{\mu-1})}$ 
of the proper transforms of all $\fL$-divisors. 

Consider every governing binomial $B \in \fG_k$. 
Locally over any chart $\fV'$, we select all possible variables $y_\pm \mid T_{\fV',B}^\pm$ from  the monomial $T_{\fV',B}^\pm$ 
with the corresponding global divisors $D^\pm \in \cD_{(\wp_k\fr_{\mu-1})}$.
We keep the pair 
$(D^+,D^-)$ if $$D^+ \cap D^- \cap \tsV_{(\wp_k\fr_{\mu-1})} \ne 
\emptyset,$$ and in this case, call $(y_+, y_-)$ and also $(D^+,D^-)$, a $\wp_k$-set,
and $D^+ \cap D^-$ a $\wp_k$-center.
We denote the set of $\wp$-centers with respect to $\fG_k$ by
$$\cZ_{(\wp_k \fr_\mu)} = \{D^+ \cap D^- \mid D^+ \cap D^- \cap \tsV_{(\wp_k\fr_{\mu-1})} \ne  \emptyset\}.$$
We embed $\cZ_{(\wp_k \fr_\mu)}$ into 
$ \cD_{(\wp_k\fr_{\mu-1})} \times  \cD_{(\wp_k\fr_{\mu-1})}$ by sending
$(D^+ \cap D^-)$ to $(D^+, D^-)$. We let 
$\cZ_{(\wp_k \fr_\mu)}$  inherit
the lexicographic order 
from that of  $ \cD_{(\wp_k\fr_{\mu-1})} \times  \cD_{(\wp_k\fr_{\mu-1})}$
 (Definition \ref{defn-divisor-order2}).
We then blow up along the centers in $\cZ_{(\wp_k \fr_\mu)} $, in that given order.
This completes one round of $\wp$-blowups,
which we denote $(\fr_\mu)$ for some 
$\mu \in \NN$.
If the center $D^+ \cap D^-$ is the $h$-th member in $\cZ_{(\wp_k \fr_\mu)} $,
we then denote the $\wp$-blowup along $D^+ \cap D^-$  as
$$(\wp_k\fr_\mu\fs_h),$$
which reads: phase $k$, round $\mu$, and step $h$.
\end{defn}

Clearly each $\cZ_{(\wp_k \fr_\mu)}$  is finite, thus,
we have $h \in [\si_{k\mu}]$ where $\si_{k\mu}$ is the cardinality of 
$\cZ_{(\wp_k \fr_\mu)}$. We will prove that all $B \in \fG_k$ terminate after a finitely many
rounds of $\wp$-blowups, that is, we have $ \mu \in  [\rho_k]$ for some positive integer $\rho_k$
depending on $k$.

\subsection{Introducing $\ell$-blowups}\label{ell-blowups} $\ $

When the governing binomials of $\fG_k$ all terminate (which
will be justified in Lemma \ref{wp:key} (4)), 
we obtain the final $\wp$-blowup  schemes for that block
$$\tsV_{\wp_k} \subset \tsR_{\wp_k},$$
equipped with the proper transform 
$E_{\wp_k, F_k}$ of $E_{\vt, F_k}$ 
(cf. Corollary \ref{vt-blowup}) and the proper transform $D_{\wp_k, F_k}$ of $D_{F_k}=(L_{F_k}=0)$.
Now, we blow up $ \tsR_{\wp_k}$ along
$$E_{\wp_k, F_k} \cap D_{\wp_k, F_k}$$ to obtain the $\ell$-blowup: 
$$ \tsR_{\ell_k} \lra \tsR_{\wp_k}.$$ 

\subsection{Notaitonal setups} $\ $

The notational setup here will be 
applied in the proof of Lemma \ref{wp:key} (4).

Consider any intermediate universal $\wp$-blowup
$$\tsR_* \lra \tsR_{*-1}.$$  
Let $\tsV_{*} \subset \tsR_{*-1}$ 
be the proper transform of $\tsV_{*-1}$. We cover 
$\tsV_{*}$ by smooth charts $\{\fV\}$ of $\tsR_{*}$,
lying over smooth charts $\{\fV'\}$ of 
$\tsR_{*-1}$. For any chart $\fV$, we let $\var_\fV$ be the set of variables on the chart, inductively, 
consisting of the proper transforms of the variables of 
$\var_{\fV'}$ and the  exceptional variable $e$ of the blowup $\tsR_* \lra \tsR_{*-1}$ such that
$$(e=0) = E_* \cap \fV,$$
where $E_*$ is
the exceptional divisor of $\tsR_* \lra \tsR_{*-1}$.

\begin{defn}\label{setup}
Locally over a chart $\fV'$ of $\sR_{*-1}$, the blowup is  along 
$$(y_+=0) \cap (y_-=0)$$ where $y_\pm \mid T_{\fV',B}^\pm$ for 
some $B=B_{(k,s)} \in  \cB^\gov_{F_k}$. 
The preimage $\pi^{-1}(\fV')$ as a closed subset of $\fV' \times \PP^1_{[\xi,\eta]}$  is
defined by $$\xi y_- - \eta y_+.$$
We have
$\pi^{-1}(\fV') \subset \fV' \times \PP^1_{[\xi,\eta]}$  is covered by 
 $(\xi \ne 0)$ and $(\eta \ne 0)$, thus, $\fV$ corresponds to 
either $(\xi \ne 0)$ or $(\eta \ne 0)$.
On the chart $\fV$ corresponding to
 $(\xi \ne 0)$, we write $y_- =\eta y_+$. In this case,
$y_+=e$ is the exceptional parameter such that $(e=0) = E_* \cap \fV$,
$\eta$ is the proper transform of $y_-$, and recycle the symbol to use
$y_-$ for $\eta$.
On the chart $\fV$ corresponding to
 $(\eta \ne 0)$, we write $y_+ =\xi y_-$. In this case,
$y_-=e$ is the exceptional parameter, and
$\xi$ is the proper transform of $y_-$, and recycle the symbol to use
$y_+$ for $\xi$. 
\end{defn}

\begin{defn}\label{subs} 
By Definition \ref{setup}, 
we can write the $\wp$-blowup center as
$$(x=0)\cap (y=0) $$
for two variables of $\fV'$. W.l.o.g., to compute the proper transform of any polynomial, we can substitute $x$ by $e$, the exceptional parameter, and substitute $y$ by $ye$. In symbols, 
$$ x \leadsto e, \;\; y \leadsto ye.$$
\end{defn}

Using Definition \ref{subs}, 
we suppose  a governing binomial
$B_{\fV'}$ can be written  as
$$B_{\fV'}: \; x f - yg$$
(without specifiying $\pm$-terms and without specifying the multiplicities of $x$ and $y$), then
after the substitution 
$$ x \leadsto e, \;\; y \leadsto ye,$$ we obtain
the proper transform
\begin{equation}\label{either-chart}
B_\fV: \; f- y g.
\end{equation}


\begin{defn}\label{position}
Let $p=(x, B, \si)$ with 
$x \in \var_{\fV'}, B \in \cB^\gov_{F_k}, \si \in \{+, -\}$, called a position.
We let  $\cP_{\fV'}$ be the set of all such positions. This is a finite set.
For any $p=(x, B, \si) \in \cP_{\fV'}$,
if $x$ is not invertible, we let the $\wp$-weight or just weight for short, $w_p$ of $p$, 
be the multiplicity of $x$ in $T^\si_{\fV', B}$;
 if $x$ is invertible,
 we let $w_p=0$. This way, we obtain a weight vector  $$(w_p)_{p \in \cP_{\fV'}}.$$
\end{defn}

\subsection{On $\wp$-blowups} $\ $

As in Definition \ref{ktaumuh}, 
we let  $\tsR_{\wp_k\fr_{\mu-1}}$
 be the scheme obtained after performing all the
$\wp$-blowups in $(\wp_k\fr_{\mu-1})$; we let $\tsV_{\wp_k\fr_{\mu-1}}$
be the proper transform of $\sV$.

We cover $\tsV_{\wp_k\fr_{\mu-1}}$ by smooth charts
$\{\fV'\}$ of $\tsR_{\wp_k\fr_{\mu-1}}$.

\begin{defn}\label{mulitiplicity-free}
For any $B \in \cB^\gov_k$, 
we say  the proper transform 
$B_{\fV'}$ is $\wp$-multiplicity-free 
or just multiplicity-free if for any variable
$y \mid T^\pm_{\fV',B}$, either $y$ is invertible or
the multiplicity of $y$ equals 1.
We say $B$ is 
multiplicity-free on $ \tsR_{\wp_k\fr_{\mu-1}}$,
if it is multiplicity-free on all charts $\{\fV'\}$.
\end{defn}

\begin{defn}
A $\wp_k$-center $D^+ \cap D^-$ is called tamed if 
for any given chart $\fV'$ having variables
$y_\pm$ on $\fV'$ such that $(y_+=0)\cap (y_-=0)= D^+\cap D^- \cap \fV'$, 
whenever $y_\pm \mid T^\pm_{B,\fV'}$,
we have $y_\mp \mid T^\mp_{B,\fV'}$, for all
$B \in \cB^\gov_k$.
\end{defn}

\begin{lemma}\label{termi} 
Let the notation be as in above.
 Assume that for any chart $\fV'$,
$B_{\fV'}$ is  $\wp$-multiplicity-free for 
all $B \in \cB_k^\gov$, 
 and all $\wp_k$-centers are tamed.
Then, every $B \in \cB_k^\gov$ terminates 
after all the $\wp$-blowups are performed in this round,
that is, every $B \in \cB_k^\gov$ terminates in $(\wp_k\fr_{\mu})$.
\end{lemma}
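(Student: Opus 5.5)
We argue by descending on the number of non-invertible variable factors in the plus- and minus-terms. Fix $B \in \cB_k^\gov$ and a chart $\fV'$ of $\tsR_{(\wp_k\fr_{\mu-1})}$ meeting $\tsV_{(\wp_k\fr_{\mu-1})}$. By multiplicity-freeness, after discarding invertible factors we may write $T^+_{\fV',B}=y^+_1\cdots y^+_a$ and $T^-_{\fV',B}=y^-_1\cdots y^-_b$ with distinct non-invertible variables, each of multiplicity one. Every pair $(y^+_i,y^-_j)$ whose common zero locus meets $\tsV$ is a $\wp_k$-set, so $D^+\cap D^-$ lies in $\cZ_{(\wp_k\fr_\mu)}$. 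The plan is to follow, step by step in the given order, what a single blowup $(\wp_k\fr_\mu\fs_h)$ does to $T^\pm_{\fV',B}$ on each of the two charts of Definition~\ref{setup}. We then show that at the end of the round, on every chart meeting $\tsV$, one of the two terms has become a unit.

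The key local computation uses Definition~\ref{subs}. For a center $(x=0)\cap(y=0)$, there are two cases.

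If $x\mid T^+$ and $y\mid T^-$ (or the reverse), then $B_{\fV'}=xf-yg$. By \eqref{either-chart}, the proper transform is $f-yg$ on one chart (and symmetrically $xf-g$ on the other). Since multiplicities are one, $x$ (resp.\ $y$) has disappeared from that term. So the number of non-invertible factors of one term drops by one, and no new non-invertible factors are created, because $e$ is removed as the common factor.

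If only one of $x,y$ divides a term of $B$, tamedness excludes this: whenever $y_\pm\mid T^\pm$ we have $y_\mp\mid T^\mp$. Hence every center either is disjoint from the variables of $B$ (leaving $B$ unchanged apart from renaming) or is of the first type for $B$.

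Next we check that the hypotheses propagate to the later steps of the same round, so the computation can be iterated. After $x\leadsto e,\ y\leadsto ye$, the factors of $T^\pm$ that survive are proper transforms of the original ones. They still have multiplicity one, and the multiplicity-free property is preserved.

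We then argue that termination holds at the end of the round. Suppose that on some final chart $\fV$ both $T^+_{\fV,B}$ and $T^-_{\fV,B}$ still contain non-invertible factors $z_+,z_-$ with $(z_+=0)\cap(z_-=0)\cap\tsV\ne\emptyset$. Tracing back, $z_\pm$ are proper transforms of original factors $y^+_i,y^-_j$ on $\fV'$, and their intersection met $\tsV$ before the round. So $(D^+,D^-)$ was a center in $\cZ_{(\wp_k\fr_\mu)}$, and the step blowing it up separated the proper transforms $\widehat D^+,\widehat D^-$ along $\tsV$. Proper transforms of separated divisors remain disjoint under further blowups, which contradicts the assumption. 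Hence along $\tsV_{(\wp_k\fr_\mu)}\cap\fV$, at each point either all $z_+$ or all $z_-$ are units. Because $B_\fV$ vanishes on $\tsV$ and has the form $T^+_{\fV,B}-T^-_{\fV,B}$, one term being a unit forces the other to be a unit there as well. This is exactly termination in the sense of Definition~\ref{termi}.

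The main obstacle is this last step, namely the locality issue. A point of $\tsV$ could lie on some $z_+=0$ and on a different $z'_-=0$ whose pairing was never intersected with $\tsV$ beforehand. One must show that every such pair was indeed a center. The approach is to use that intersecting with $\tsV$ only shrinks under blowups: if the proper transforms meet $\tsV$ at the end, the original divisors met $\tsV$ at the start. Care is needed here with the ordering of the centers, to ensure that an earlier step does not destroy the center structure of a later one.
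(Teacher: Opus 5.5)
Your proposal is correct and follows essentially the same route as the paper's proof. Both argue by contradiction on a final chart: multiplicity-freeness and tamedness, through the substitution $x\leadsto e$, $y\leadsto ye$, force the offending factors $z_\pm$ to be proper transforms of factors of $T^\pm_{\fV',B}$, whose center was already blown up in the round, so $z_+$ and $z_-$ cannot vanish together on $\tsV$. Your version spells out what the paper leaves implicit (the separation and persistent disjointness of the proper transforms, the descent of ``meets $\tsV$'' to the start of the round, and the use of $B$ vanishing on $\tsV$ to get termination), though, like the paper, you do not fully check that tamedness persists through the later steps of the round.
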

\begin{proof}
We let $\tsV_{\wp_k\fr_{\mu}} \subset \tsR_{\wp_k\fr_{\mu}} $ obtained after performing all the
$\wp$-blowups in $(\wp_k\fr_{\mu})$ are performed.
We cover $\tsV_{\wp_k\fr_{\mu}}$ 
by smooth charts  $\{\fV\}$ of
$\tsR_{\wp_k\fr_{\mu}} $. 
Suppose $B_\fV=T^+_{\fV, B} - T^-_{\fV, B}$ 
does not terminate  for some $B \in \cB^\gov_k$ and
chart $\fV$. Then, there are
$y_\pm \mid T^\pm_{\fV, B}$ such that
\begin{equation}\label{no-such-center}
(y_+=0)\cap (y_-=0) \cap \fV \ne \emptyset.
\end{equation}
Then, because $B_\fV$ is $\wp$-multiplicity-free, 
and all centers are tamed, 
using \eqref{either-chart},
we see that $y_\pm$ must be proper transform of some 
$y'_\pm \mid 
T^\pm_{\fV', B}$ for some $\fV'$ where $\fV$ lies over.  But, 
the blowup along the center $$(y'_+=0) \cap (y'_-=0) \cap \fV' $$
has been performed, then again, 
by \eqref{either-chart},
it is impossible to have
\eqref{no-such-center}. 
\end{proof}

In the lemma below, we use the setup in Definition \ref{setup}.
So we have $\pi: \tsR_* \lra \tsR_{*-1}$, a chart $\fV$
of  $\tsR_*$ lying over a chart $\fV'$ of  $ \tsR_{*-1}$,
and $\pi^{-1}(\fV') \lra \fV'$ is the blowup along 
$(y_+=y_-=0)$ with $y_\pm \mid T^\pm_{\fV', B}$ for some
$B=B_{(k,s)} \in \cB^\gov$.

\begin{lemma}\label{wp:key}  
Consider any chart $\fV$ of $\tsR_*$ 
lying  over $\fV'$ of $\sR_{*-1}$. 
We can write $B_{\fV'}$ as
$$ y_+\barT^+_{\fV', (k,s)} -
y_-^m \barT^-_{\fV', (k,s)}$$
where $m$ is the multiplicity  of $y$ for some integer $m>0$.
\begin{enumerate}
\item On $(\xi \ne 0)$, $y_-=\eta y_+$, we have  
$$B_{\fV, (k,s)}: \; \barT^+_{\fV', (k,s)}  -
  e^{m-1} y_-^m \barT^-_{\fV, (k,s)}.$$
\item 
On $(\eta \ne 0)$, $y_+ = \xi y_-$, we have 
$$B_{\fV, (k,s)}: \; y_+\barT^+_{\fV', (k,s)}  -
  e^{m-1}  \barT^-_{\fV, (k,s)}.$$
\item $T^+_{\fV, B}$ is square-free for all $B \in \fG$.
\item Every $B \in \fG_k$ terminates
after finitely many $\wp$-blowups with respect to $\fG_k$. 
\end{enumerate}
\end{lemma}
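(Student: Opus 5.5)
Parts (1) and (2) follow from direct substitution, as in Definition \ref{subs}. Part (3) I would prove by induction along the whole sequence of universal blowups. Part (4) I would prove by a decreasing-invariant argument built on the weight vectors of Definition \ref{position}, reducing in the end to Lemma \ref{termi}.

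For (1) and (2), write $B_{\fV'}= y_+\barT^+ - y_-^m\barT^-$. By Lemma \ref{vt-blowup}(1) and the inductive hypothesis of (3), $y_+$ occurs with multiplicity one in $T^+$. On $(\xi\ne0)$ substitute $y_+\leadsto e$, $y_-\leadsto y_-e$. The two terms become $e\barT^+$ and $e^m y_-^m\barT^-$. Factoring out $e$ once gives $\barT^+ - e^{m-1}y_-^m\barT^-$, where the remaining variables of $\barT^\pm$ are replaced by their proper transforms (which are unchanged unless they equal $y_\pm$). On $(\eta\ne0)$ substitute $y_-\leadsto e$, $y_+\leadsto y_+e$. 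Removing one factor of $e$ gives $y_+\barT^+ - e^{m-1}\barT^-$.

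For (3), I would note that initially $T^+_B = x_kx_{(k,s)}$ is square-free by Corollary \ref{vt-blowup}(1). Then I would check that every $\wp$-blowup preserves this. The center is always $D^+\cap D^-$ with $y_+$ taken from a plus-term and $y_-$ from a minus-term. In the new plus-term, the exceptional variable $e$ appears only when the center variable $y_-$ divided $T^+$, or when $y_+$ divided $T^-$. In both cases $e$ replaces a variable that appeared to power one in $T^+$, and it cannot combine with another copy of itself. This requires a case check that $y_+$ and $y_-$ do not both divide the same $T^+_{B'}$ for another $B'$. That check should follow from pleasantness (Definition \ref{stand-models}(2)) together with the form $x_{k'}x_{(k',s')}$ of the plus-terms: the pair $(x_k,\vr)$ is distinct for each $B'$. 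The $\ell$-blowups are handled in the same way, since $T^+$ involves no $\fL$-variables.

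For (4), which I expect to be the main obstacle, I would use the lexicographically ordered weight vector $(w_p)_{p\in\cP_{\fV'}}$ restricted to positions with $B\in\fG_k$. The aim is to show that after each round the maximal minus-multiplicity strictly drops, or the number of untamed centers drops. By (1) and (2), each blowup lowers the exponent of the relevant minus-variable from $m$ to $m-1$ (carried by $e$). The plus side stays square-free by (3), so no multiplicity grows on that side. Since the weights are nonnegative integers and the centers are finite per round, after finitely many rounds every $B_{\fV'}$ becomes multiplicity-free. The delicate point is \emph{tamedness}: a variable dividing $T^+_B$ must also pair with $T^-_B$ for all $B\in\fG_k$ sharing that divisor. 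For this I would use that all $B\in\fG_k$ share the variable $x_k$ in their plus-terms, and that the lexicographic order of centers (Definition \ref{defn-divisor-order2}) processes exceptional divisors first. Once both multiplicity-freeness and tamedness hold, Lemma \ref{termi} shows that every $B\in\fG_k$ terminates in the next round, which gives (4).
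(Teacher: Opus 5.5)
Parts (1) and (2) are fine and agree with the paper. There are genuine gaps in (3) and (4).

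In (3), the case check you defer to pleasantness is where the difficulty lies, and as you state it, it is false. Pleasantness only separates different blocks. That is how the paper treats $B\in\fG_j$ with $j>k$, whose plus-terms $x_jx_{(j,a)}$ no $\wp_k$-center touches; the case $j<k$ is handled by termination. Inside $\fG_k$, however, all plus-terms share $x_k$, so they do not keep the shape $x_kx_{(k,t)}$. Suppose $y^m\mid T^-_{(k,s)}$ with $m\ge 2$ and $y\nmid T^-_{(k,t)}$. Blowing up $(x_k,y)$ gives, on the chart $y\leadsto e$, $x_k\leadsto x_ke$, the relations $x_kx_{(k,s)}-e^{m-1}\barT^-_{(k,s)}$ and $e\,x_kx_{(k,t)}-T^-_{(k,t)}$. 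Now $(x_k,e)$ is a $\wp_k$-set of $B_{(k,s)}$, and both of its variables divide $T^+_{(k,t)}$. This is exactly the paper's pattern \eqref{imp-1}. The paper excludes it at the moment $(y_+,y_-)$ would be blown up, and it does so through the order of centers, not pleasantness. It argues that in this pattern $y_-<y_+$, so a center $(y_-,*)$ with $*\mid T^-_{(k,t)}$ precedes $(y_+,y_-)$ in the lexicographic order of Definition \ref{defn-divisor-order2}. You need an argument of this kind.

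In (4), your invariant is not monotone, for three reasons.
\begin{enumerate}
\item By your own formula for (1), on $(\xi\ne0)$ the transform of $y_-$ keeps exponent $m$ and a factor $e^{m-1}$ is added. So the maximal minus-multiplicity does not drop there. The only progress on that chart is that $y_+$ leaves $T^+$, which your measure does not record.
\item A blowup chosen for $B_{(k,s)}$ transforms every $B_{(k,t)}$. If both center variables divide $T^-_{(k,t)}$ but not $T^+_{(k,t)}$, then $y_+^ay_-^b\leadsto e^{a+b}y_-^b$. So multiplicity can appear even from multiplicity-free data (take $a=b=1$). The paper has to exclude this configuration as pattern \eqref{imp-2}, and your sketch never addresses it.
\item The claim that ``the number of untamed centers drops'' has no mechanism behind it. New exceptional variables create new untamed $\wp_k$-sets. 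For example, $(x_k,e)$ above is untamed, since $x_k\mid T^+_{(k,t)}$ but $e\nmid T^-_{(k,t)}$.
\end{enumerate}

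The paper's device is different. It attaches weights to the positions of the initial chart $\fU$ and carries them along chains of charts. These weights decrease componentwise at every step, by (1), (2) and the two excluded patterns, so they must stabilize, and stabilization means no new exceptional variable is created. Since a repeated non-invertible variable or an untamed center would force a new one, multiplicity-freeness and tamedness are obtained simultaneously. Only then is the termination lemma invoked.
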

\begin{proof} 
We prove the lemma by induction on $(\wp_k\fr_\mu)$. We assume
that all the statement hold for $(\wp_k\fr_{\mu-1})$.
By  induction,
 the multiplicity of $y_+$ in $T^+_{\fV', (k,s)}$ equals 1.

(1) and (2) follow immediately by the substitutions
$x \leadsto e, y \leadsto ye$ per Definition \ref{subs}.

(3). 

If $B \in \fG_j$ with $j <k$, 
the square-freeness of $T^+_{\fV, B}$ follows by (4) and induction.

If $B=B_{(j,a)} \in \fG_j$ with $j > k$ and $a \in [\tau_j]$, then 
$$T^+_{\fV, B}=x_{\fV, j} x_{(\fV, (j,a)}$$ by the pleasantness of Definition
\ref{stand-models}.

It remains to consider $B \in \fG_k$.

The square-freeness of $T^+_{\fV, (k,s)}$ follows from (1) and (2).

Consider any $t \ne s$. 
Then only possibility of $T^+_{\fV, (k,t)}$ becoming non-square-free occurs  when  we have
\begin{equation}\label{imp-1}
 y_+\barT^+_{\fV', (k,s)} - y_-^m \barT^-_{\fV', (k,s)},
\end{equation}
$$ y_- y_+\barT^+_{\fV', (k,t)} -
T^-_{\fV', (k,t)}.$$
But, this implies that $y_-<y_+$,
hence by the order of $\wp$-blowup,
the center $(y_-, *)$ with some $* \mid T^-_{\fV', (k,t)}$ precedes $(y_+, y_-)$, making the above impossible.
Thus, $T^+_{\fV, (k,t)}$ remains to be square-free.

Before proving (4), we need the following.
{\it Claim: The following pattern is impossible.}
\begin{equation}\label{imp-2}
 y_+\barT^+_{\fV', (k,s)} - y_-^m \barT^-_{\fV', (k,s)},
\end{equation}
$$ T^+_{\fV', (k,t)} -
y_+ y_- \barT^-_{\fV', (k,t)}.$$
Suppose this occurs, then to get $y_+$ in 
$y_+ y_- \barT^-_{\fV', (k,t)}$, there must be 
$y  \mid T^-_{\fV'', (k,t)}$ (which could be $y_-$) for
some earlier chart $\fV''$ that $\fV'$ lies over and
a  binomial $B$ in $\cB^\gov_k$ distinct from $B_{(k,s)}$ and $B_{(k,t)}$ such that 
\begin{equation}\label{imp-3}
B_{\fV''}:  y_+ f - y g,\end{equation} 
$$ T^+_{\fV'', (k,t)} -
y_+ y \barT^-_{\fV'', (k,t)},$$
and 
$(y_+, y)$ is the center on the chart $\fV''$.
But, by a simple induction, \eqref{imp-3} is impossible.

(4).

Inductively, we let  $\tsR_{\ell_{k-1}}$ be the schemes obtained by
all the universal $\wp$- and $\ell$-blowups with respect to
$\fG_1< \cdots < \fG_{k-1}$, starting from $\tsR_{\ell_{-1}}:=\tsR_\vt$.

In this part, we argue from $\tsR_{\ell_{k-1}}$ and on.
We cover $\tsV_{\ell_{k-1}}$ by smooth charts $\{\fU \}$ of 
$\tsR_{\ell_{k-1}}$. By Definition  \ref{position},
we obtain
$$N^0_{\fU}=(w_p)_{p \in \cP_{\fU}}, \; \forall \; \fU.$$

Starting from $\tsR_{\ell_{k-1}}$, we let the consecutive sequential 
$\wp$-blowups be written as
$$\tsR_{\ell_{k-1}} \longleftarrow \tsR_1 \longleftarrow \tsR_2 
\longleftarrow \cdots.$$
We cover $\tsV_j$ (the proper transform of $\tsV$) by smooth charts
$\{\fV_j\}$ of $\tsR_j$. Consider any chain of charts
$$\fU \longleftarrow \fV_1 \longleftarrow \fV_2 \longleftarrow \cdots$$
such that $\fV_j$ lies over $\fV_{j-1}$, while $\fV_0=\fU$.

We have the weight vector $N^0_{\fU}=(w_p)_{p \in \cP_{\fU}}$, inductively, we assume 
 the weight vector 
$N^{j-1}_{\fU}=(w^{j-1}_p)_{p \in \cP_{\fU}}$ is defined.

Consider $\fV_j$ over $\fV_{j-1}$.
For any $p=(x,B, \si) \in \cP_\fU$, we have
$w^{j-1}_p$. Inductively, using Definition \ref{subs}, any variable
of $x$ of $\var_\fU$ is transformed to a unique variable $x_{j-1}$ of $\var_{\fV_{j-1}}$. We let $e_j$ be the exceptional variable of $\fV_j$ for the blowup $\tsR_j \lra \tsR_{j-1}$. There are three possibilities.
\begin{enumerate}
\item if $x_{j-1} \leadsto e_j$, we let $w^j_{p}$ be the weight of $e_j$;
\item if $x_{j-1} \leadsto x_{j-1} e_j $, we let $w^j_{p}$ be the weight of $e_j$;
\item in all the remaining cases, we let $w^j_{p}=w^{j-1}_{p}$.
\end{enumerate}
This way, we obtain a new weight vector 
$N^j_{\fU}=(w^j_p)_{p \in \cP_{\fU}}$. Then by (1) and (2) of
this lemma, and by the impossibilities of the patterns
\eqref{imp-1} and \eqref{imp-2}, we obtain
\begin{equation}\label{ex-wt}
N^0_{\fU}>_{\rm cw} N^1_{\fU} >_{\rm cw} N^2_{\fU} >_{\rm cw} \cdots \end{equation}
where $(a_p)_{p \in \cP_{\fU}}>_{\rm cw} (b_p)_{p \in \cP_{\fU}}$
if $a_p \ge b_p$ for all $p \in \cP_{\fU}$, 
and at least one of the inequalities is strict.
Therefore,  the descending chain of weight vectors
\eqref{ex-wt} has to terminate to become stable, as
the smallest weight is zero. 
This implies that after finite steps of $\wp$-blowups, 
it has to cease to create new exceptional variables for any chart.
That is,  at this step, the following two hold:
(1) any variable is either invertible, 
or else, has multiplicity 0 or 1;
(2) any $\wp$-center is tamed.
 For if there is $x^2$ appears in $B_\fV$ for some $B \in \cB^\gov_k$
on some chart $\fV$ and $x$ is not invertible, then it
will certainly create new exceptional variable $e$ by (1) and (2) of
this lemma, a contradiction.
Likewise, if any center $(y_+, y_-)$ is not tamed, it 
will also certainly create new exceptional variable $e$.

Thus, after a finite step of $\wp$-blowups, the proper transforms
of $B \in \cB^\gov_k$ must all be $\wp$-multiplicity-free, and all $\wp$-centers must be tamed,
to cease to produce
any exceptional variables on any chart.

Then, from here, by applying Lemma \ref{termi}, we conclude that all $B \in \fG_k$ terminate.
\end{proof}

Below, we continue to follow
the setup in Definition \ref{setup}.

\begin{lemma}\label{termi-charts} 
Suppose $B_{\fV', (k,s)}: y_+f  - y_- g$,
and after blowing up along $(y_+=y_-=0)$,
$B_{\fV, (k,s)}$ terminates, then
$\tsV_* \cap \fV$ is covered by $(\xi \ne 0) \cap (\eta \ne 0)$.
\end{lemma}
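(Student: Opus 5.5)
The plan is to work chart by chart over a fixed $\fV'$ and use the explicit description of the blowup from Definition \ref{setup}. The preimage $\pi^{-1}(\fV')\subset \fV'\times\PP^1_{[\xi,\eta]}$ is cut out by $\xi y_- - \eta y_+$. It is covered by the two charts $(\xi\ne0)$ and $(\eta\ne0)$. I read the claim as: on the chart $\fV$ where $B_{\fV,(k,s)}$ terminates, every point of $\tsV_*\cap\fV$ has both $\xi\ne0$ and $\eta\ne0$. Equivalently, $\tsV_*$ lies in the open set where the proper transforms of both $y_+$ and $y_-$ are invertible. So the whole proof reduces to showing that the proper transform of the non-exceptional variable is invertible along $\tsV_*$ on each chart.

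\emph{Step 1: the chart $(\xi\ne0)$.} Here $y_+=e$ is the exceptional parameter, $y_-=\eta y_+$, and $\eta$ is the proper transform of $y_-$. By the substitution of Definition \ref{subs} (equivalently Lemma \ref{wp:key}(1), or \eqref{either-chart}), the proper transform is $B_{\fV,(k,s)}: f-\eta g$, up to the exceptional power.

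Termination (Definition \ref{termi}) says both terms $T^\pm_{\fV,B}$ are invertible along $\fV\cap\tsV_*$. The variable $\eta$, the proper transform of $y_-$, divides the minus-term. Hence $\eta$ is invertible on $\tsV_*\cap\fV$, so $\tsV_*\cap(\xi\ne0)\subset(\eta\ne0)$. The one point to watch is that when $m>1$ the factor $e^{m-1}$ also appears; invertibility of the term then forces $e$ invertible as well, which only strengthens the conclusion.

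\emph{Step 2: the chart $(\eta\ne0)$.} This is symmetric. Here $y_-=e$, $y_+=\xi y_-$, and $\xi$ is the proper transform of $y_+$, which divides $T^+_{\fV,B}$ by Lemma \ref{wp:key}(2). Termination makes the plus-term invertible along $\tsV_*$, so $\xi$ is invertible there. Hence $\tsV_*\cap(\eta\ne0)\subset(\xi\ne0)$.

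\emph{Step 3: gluing.} Combining the two steps, $\tsV_*\cap\pi^{-1}(\fV')$ lies in $(\xi\ne0)\cap(\eta\ne0)$. Therefore $\tsV_*\cap\fV$ is covered by $(\xi\ne0)\cap(\eta\ne0)$, as claimed.

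The main obstacle is making sure that on each chart the proper transform of the relevant variable genuinely still divides the corresponding term. One must rule out that it was cancelled when the exceptional factor was divided out, i.e.\ that the exceptional power in \eqref{either-chart} was not removed from the wrong side. For this I would invoke the multiplicity bookkeeping in Lemma \ref{wp:key}(1)--(2): the plus-term multiplicity of $y_+$ is $1$ by (3), and any extra exceptional power sits on the minus side. With that in hand, the divisibility needed in Steps 1 and 2 holds, and the rest is immediate.
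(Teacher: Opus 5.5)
Your proposal is correct and is essentially the paper's argument: on $(\xi\ne0)$ the transform $f-\eta g$ together with termination forces $\eta$ to be invertible along $\tsV_*$, and on $(\eta\ne0)$ the transform $\xi f-g$ forces $\xi$ to be invertible; the two charts then combine. The worry in your last paragraph is harmless, and Lemma \ref{wp:key}(3) is not needed there: passing to the proper transform only divides out powers of $e$, so $\eta$ (resp.\ $\xi$) always survives in the minus (resp.\ plus) term, whatever the multiplicities.
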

\begin{proof} 
We have 
$\pi^{-1}(\fV') \subset  \fV' \times \PP^1_{[\xi, \eta]}$
is defined by $ y_+ \eta - y_- \xi =0.$

Over $(\xi \ne 0)$, we have
$$B_{\fV, (k,s)}: f - \eta g.$$
Since, it terminates, obtain $\eta \ne 0$ along $\tsV_* \cap \fV$.

Similarly, over $(\eta \ne 0)$, we have
$$B_\fV: \xi f -  g.$$
Since, it terminates, obtain $\xi \ne 0$ along $\tsV_* \cap \fV$.

This implies $\tsV_* \cap \fV$ is covered by $(\xi \ne 0) \cap (\eta \ne 0)$.
\end{proof}

\subsection{On $\ell$-blowups} $\ $

Now we consider the $\ell$-blowup
$$ \tsR_{\ell_k} \lra  \tsR_{\wp_k}$$ with the induced
projection
$$ \tsV_{\ell_k} \lra  \tsV_{\wp_k}.$$

\begin{lemma}\label{LF-forms}  
The scheme $\tsV_{\ell_k}$ can be covered by smooth charts of $\{\fV\}$
$\tsR_{\ell_k}$. Over any chart $\fV$ of $\tsR_*$ 
lying  over $\fV'$ of  $\tsR_{\wp_k}$,  we have either
\begin{equation}\label{case-alpha}\nonumber
(\alpha) \;\; 
L_{\fV, F_k} =  x_{\fV, (0,k)}+ \sum_{i=1}^l  x_{\fV, (k,s_i)} + 
 \sum_{j=1}^q x^*_{\fV, (k,t_j)}
\end{equation}
where $x^*_{\fV, (k,t_j)}$ 
denotes the pull-back, and $l+q=\tau_k$;  or
\begin{equation}\label{case-beta}\nonumber
(\beta) \;\;\;\;\;\; 
L_{\fV, F_k} =   y_{\fV, (k,0)} +1
\end{equation}
where $y_{\fV, (k,0)}$ is the proper
transform of the local parameter 
$\de_{(k,0)}$ for $E_{\vt,F_k}$.
\end{lemma}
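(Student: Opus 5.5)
We first determine what $L_{F_k}$ looks like on the charts of $\tsR_{\wp_k}$, and then compute its proper transform under the $\ell$-blowup along $E_{\wp_k,F_k}\cap D_{\wp_k,F_k}$. Smoothness of $\tsR_{\ell_k}$ is immediate, since locally on any chart $\fV'$ of $\tsR_{\wp_k}$ the center is the intersection of two smooth divisors. We will show below that they meet transversally, so that locally the center is cut out by two coordinates.

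\textbf{Step 1: the form of $L_{F_k}$ on $\tsR_{\wp_k}$.} By Corollary \ref{vt-blowup} (3), on a chart of $\tsR_\vt$ we have either form $(\alpha)$ or form $(\beta)$ for $L_{F_k}$, with $\de_{(k,0)}$ defining $E_{\vt,F_k}$. The $\wp$-blowups for blocks $\fG_j$ with $j<k$, and the earlier $\ell$-blowups, do not touch the $\vr$-variables of $F_k$; this follows from pleasantness (Prop.-Defn. \ref{pleas}) and Definition \ref{stand-models}(2).

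The $\wp_k$-blowups do involve the $x_{(k,s)}$ and $\de_{(k,0)}$. By Definition \ref{subs} (with $x\leadsto e$, $y\leadsto ye$), each $\vr$-variable $x_{(k,s)}$ occurring in $L_{F_k}$ becomes either a proper transform $x_{\fV',(k,s)}$ or a pull-back, that is, a product of the proper transform with exceptional variables. Since $L_{F_k}$ is linear, we record this as a split of $[\tau_k]$ into indices $s_i$ (where the variable is a proper transform) and $t_j$ (where it is a pull-back $x^*$), with $l+q=\tau_k$.

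The key point is termination (Lemma \ref{wp:key}(4)) together with Lemma \ref{termi-charts}. For every $s$, the ratios $\xi,\eta$ are invertible along $\tsV_{\wp_k}$. Consequently, on a chart meeting $E_{\wp_k,F_k}$, the governing binomials $B_{(k,s)}$, read as $x_kx_{(k,s)}=\bm_{(k,s)}\de_{(k,0)}$, allow each $x_{(k,s)}$ to be written as (unit)$\cdot\de_{(k,0)}$ times a common exceptional factor. Dividing by this factor, we would like to obtain
\[
L_{F_k}=\de_{(k,0)}\cdot(\text{unit}) \quad \text{or} \quad L_{F_k}=\de_{(k,0)}+\sum_s u_s,
\]
with each $u_s$ a unit or a proper-transform variable.

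\textbf{Step 2: the $\ell$-blowup.} On a chart where $E_{\wp_k,F_k}$ does not meet $\fV'$, the center is empty there. Then $L_{\fV,F_k}$ is $L_{\fV',F_k}$, which after Step 1 has the shape $(\alpha)$; the $(k,0)$-variable stays a genuine variable, as in Corollary \ref{vt-blowup}$(\alpha)$.

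Otherwise, the center is $(\de=0)\cap(L=0)$. We need $\de$ and $L$ to be part of a coordinate system. This holds because $L$ contains some $x_{(k,s_i)}$ (or a unit multiple of one) linearly, and that term can be taken as a coordinate replacing $L$. Blowing up, we get two charts:

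On the chart $\de\leadsto e$, $L\leadsto Le$, the proper transform of $L$ becomes a new coordinate $y-1$ after the change of variables. Equivalently, writing $L=e(y_{(k,0)}+1)$ with $y_{(k,0)}$ the proper transform of $\de_{(k,0)}$ suitably normalized, we obtain form $(\beta)$.

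On the chart $L\leadsto e$, $\de\leadsto \de e$, the exceptional divisor is the $\fL$-divisor itself, and $L_{\fV,F_k}$ is the exceptional parameter. Along $\tsV$ it must vanish. Rewriting it via the linear expansion again gives form $(\alpha)$, with $x_{\fV,(k,0)}$ being the proper transform of $\de$ and the remaining terms split into proper transforms and pull-backs.

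\textbf{Main obstacle.} The hard step is Step 1: showing rigorously that, after termination, $L_{F_k}$ has the normalized shape on every chart. In particular, one must show that the exceptional factors shared by all the $x_{(k,s)}$ can be uniformly divided out, and that the transversality needed for the $\ell$-center holds. I would try to do this by induction along the $\wp_k$-rounds, tracking via \eqref{either-chart} that each $\wp$-blowup multiplies a subset of the terms of $L_{F_k}$ by the same exceptional variable. This is exactly what produces the pull-backs $x^*_{(k,t_j)}$.
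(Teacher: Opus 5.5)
\textbf{There is a genuine gap in your Step 2: your analysis of the two charts of the $\ell$-blowup is wrong.} Your handling of charts that miss $E_{\wp_k,F_k}$, and your argument that the center is cut out by two coordinates, agree with the paper.

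Take first the chart $L\leadsto e$, $\de\leadsto \de e$. There the proper transform of $L_{F_k}$ is the unit $1$. Since $L_{F_k}$ vanishes on $\sV$, $\tsV_{\ell_k}$ lies in the proper transform of $D_{\wp_k,F_k}$, so it does not meet this chart at all. Your claim that $L_{\fV,F_k}$ is the exceptional parameter and vanishes along $\tsV$ would put a proper transform inside the exceptional divisor, which is impossible. Also, an ``$(\alpha)$-form'' whose leading term is the proper transform of $\de_{(k,0)}$ is not the lemma's $(\alpha)$. In the lemma, $x_{\fV,(k,0)}$ is the untouched, invertible variable over $x_{(k,0)}\neq 0$, and Lemma \ref{jjj-alpha} depends on this: it sets $x_{\fV,(k,0)}=1$ to get $\sum_i x_{(k,s_i)}(\bz)=-1$.

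On the other chart, $\de\leadsto e$, $L\leadsto \lambda e$, the proper transform of $L_{F_k}$ is the coordinate $\lambda$. But $\de_{(k,0)}$ has become the exceptional parameter, so its proper transform is not a coordinate there. No ``normalization'' turns $\lambda$ into $y_{\fV,(k,0)}+1$ with $y_{\fV,(k,0)}$ the proper transform of $\de_{(k,0)}$.

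\textbf{The missing idea is the paper's.} Write $L_{\fV',F_k}=\de_{\fV',(k,0)}+L^*_{\fV',F_k}$, so that $\pi^{-1}(\fV')$ is cut out by $\eta\,\de_{\fV',(k,0)}-\xi L^*_{\fV',F_k}$.
\begin{itemize}
\item On $(\xi\neq0)$ one gets $L_{\fV,F_k}=1+\eta$; on $(\eta\neq0)$ one gets $L_{\fV,F_k}=1+\xi$.
\item Since $L_{\fV,F_k}$ vanishes along $\tsV_{\ell_k}$, the ratio equals $-1$ there. Hence $\tsV_{\ell_k}\cap\pi^{-1}(\fV')$ lies in $(\xi\neq0)\cap(\eta\neq0)$.
\item One may therefore work on $(\eta\neq0)$, where $\xi$ is the proper transform of $\de_{(k,0)}$ and $L_{\fV,F_k}=\xi+1$.
\end{itemize}
So every point of $\tsV_{\ell_k}$ over $x_{(k,0)}=0$ gives $(\beta)$, and $(\alpha)$ occurs exactly over $x_{(k,0)}\neq 0$, where the center is absent. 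This is the paper's case split by $x_{(k,0)}(\bz_0)$.

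\textbf{Your Step 1 also goes in the wrong direction.} After the $\vt_k$-blowup, $x_k=x_{\fV,k}\de_{(k,0)}$ and $x_{(k,0)}=\de_{(k,0)}$. So $\de_{(k,0)}$ is divided out of the total transform of $B_{(k,s)}$ and no longer appears in its proper transform; reading $B_{(k,s)}$ as $x_kx_{(k,s)}=\bm_{(k,s)}\de_{(k,0)}$ is incorrect. Nor is $L_{F_k}$ ever divided by a common exceptional factor, since its $\de_{(k,0)}$-term carries none. The shape to record before the $\ell$-blowup is simply
\[
L_{\fV',F_k}=\de_{\fV',(k,0)}+\sum_i x_{\fV',(k,s_i)}+\sum_j x^*_{\fV',(k,t_j)}.
\]
Your alternative $L_{F_k}=\de_{(k,0)}\cdot(\text{unit})$ would make the $\ell$-center all of $E_{\wp_k,F_k}$ and force $\tsV$ into it, so it cannot occur along $\tsV$.
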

\begin{proof} 
We let $\pi: \tsR_{\ell_k} \lra \sR$ be the induced projection. Take any point $\bz \in \tsV_{\ell_k}$
and let $\bz_0=\pi(\bz) \in \sR$.

Suppose $x_{ (k,0)} (\bz_0) \ne 0$. Then, we 
can assume $\bz \in \fV$, a chart of $\tsR_{\ell_k}$,
lying over a chart $\fV'$ of $\tsR_{\wp_k}$, 
and all lying over the chart $(x_{ (k,0)} \ne 0)$ of $\sR$.
Then, $x_{\fV', (k,0)}$  is invertible and never been used in any blowup. Thus, it is clear we get
$$L_{\fV, F_k} =  x_{\fV, (0,k)}+ 
\sum_{i=1}^l  x_{\fV, (k,s_i)} + 
 \sum_{j=1}^q x^*_{\fV, (k,t_j)}$$
where $x_{(k,s_i)} (\bz_0) \ne 0, 1 \le i \le l$
and $x_{(k,t_j)} (\bz_0) = 0, 1 \le j \le q$
for some $l, q$ with $l+q=\tau_k$.

Next, suppose $x_{ (k,0)} (\bz_0) =0$.
We let $E_{\wp_k, F_k}$ be the proper transform
of $E_{\vt, F_k}$ (cf. Corollary \ref{vt-blowup}).
 Then after all $\vt$-blowups, 
 all $\wp$- and $\ell$-blowups 
with respect to $\fG_j$ for all $j < k$,
and all $\wp$-blowups 
with respect to $\fG_k$,
we obtain 
$$L_{\fV', F_k} =\de_{\fV', (k,0)}+ 
\sum_{i=1}^l x_{\fV', (k, s_i)} + 
 \sum_{j=1}^q (t_j) x^*_{\fV', (k, t_j)}$$
where $(\de_{\fV', (k,0)}=0) = 
E_{\wp_k, F_k}\cap \fV'$,
 $x_{(k,s_i)} (\bz_0) \ne 0, 1 \le i \le l$
and $x_{(k,t_j)} (\bz_0) = 0, 1 \le j \le q$
for some $l, q$ with $l+q=\tau_k$.

Now, we blow up $ \tsR_{\wp_k}$ along
the $\ell_k$-center $ E_{\wp_k, F_k} \cap 
D_{\wp_k, F_k}$
 to obtain the $\ell$-blowup: 
$$ \tsR_{\ell_k} \lra \tsR_{\wp_k}.$$ 
The $\ell_k$-center 
$ E_{\wp_k, F_k} \cap  D_{\wp_k, F_k}$ is smooth because locally over any chart $\fV'$,
$$E_{\wp_k, F_k} \cap  D_{\wp_k, F_k} \cap \fV'=
(\de_{\fV', (k,0)}=0) \cap (L_{\fV', F_k}=0) \cap  \fV'$$ is
defined by 
$$\de_{\fV', (k,0)}=0, \;\;\; L^*_{\fV, F_k} =  \sum_{i=1}^l  x_{\fV', (k,s_i)} + 
 \sum_{j=1}^q x^*_{\fV', (k, t_j)},$$
and $l \ge 2$ so that at least one of $\{x_{\fV', (k,s_i)}\}$
provides a linear variable in $L^*_{\fV, F_k}$.
Hence, the $\ell_k$-center is smooth along $\tsV_{\wp_k}$
so that we can cover 
$\tsV_{\ell_k}$ by smooth charts $\{\fV\}$ of
 $\tsR_{\ell_k}$.

The closed subset
$\pi^{-1}(\fV') \subset \fV' \times \PP^1_{[\xi, \eta]}$
is defined by
$$\eta \de_{\fV', (k,0)} - \xi L^*_{\fV', F_k}.$$
Note that
$$L_{\fV', F_k} =  \de_{\fV', (k,0)}  + L^*_{\fV', F_k}$$
Over $(\xi \ne 0)$, we have 
$L^*_{\fV', F}= \eta \de_{\fV', (k,0)} $.
Hence 
$$L_{\fV, F_k} =1+  \eta  .$$
Thus $\eta$ is invertible along $\fV \cap \tsV_{\ell_k}$.
Over $(\eta \ne 0)$, we have 
$ \de_{\fV', (k,0)} =\xi  \ L^*_{\fV', F_k} $.
Hence 
$$L_{\fV, F_k} = \xi  + 1.$$
Thus $\xi$ is invertible along $\fV \cap \tsV_{\ell_k}$.
This implies that $\pi^{-1}(\fV')$ is covered by
$(\eta \ne 0) \cap (\xi \ne 0)$.
Therefore, we can choose to use the chart $(\eta \ne 0)$,
over which 
$$L_{\fV, F_k} = y_{\fV, (k,0)}  + 1.$$
where $y_{\fV, (k,0)}=\xi$ 
is the proper transform of $ \de_{\fV', (k,0)}$.

Thus,  by the above discussion,
the case when $x_{ (k,0)} (\bz_0) \ne 0$ gives  
 ($\alpha$),  while the case when $x_{ (k,0)} (\bz_0) = 0$
gives ($\beta$).
\end{proof}

\subsection{On the final blowup schemes $\tsV_\ell \subset \tsR_\ell$}
\label{the-final-scheme}  $\ $

We let $\tsR_\ell$ be the final blowup scheme and $\tsV_\ell$ the proper transform of $\sV$.

\begin{thm}\label{final-forms} 
 The  blowup scheme
$\tsV_\ell$  can be covered by  smooth  charts $\{\fV\}$ of $\tsR_\ell$
such that the following holds. 
\begin{enumerate}
\item For  any governing binomial $B_{(k,s)}$ in $\cB^\gov$, we have either
$$ T_{\fV, (k,s)}^+=   a x_{\fV, (k,s)}\;\; \hbox{or} \;\; 
T_{\fV,(k,s)}^+= a y_{\fV, k} x_{\fV, (k,s)}$$
where $a$ invertible, 
$y_{\fV,k}$ is a transform of  $x_k$.
Furthermore, $y_{\fV, k}$ and $x_{\fV, (k,s)}$  are
 both pleasant variables.
\item For any $F_k \in \sF$,  $L_{\fV, F_k}$ takes one of the following two forms:
\begin{equation}\label{case-alpha}\nonumber
(\alpha) \;\; 
L_{\fV, F_k} =  x_{\fV, (0,k)}+ \sum_{i=1}^l  x_{\fV, (k,s_i)} + 
 \sum_{j=1}^q x^*_{\fV, (k,j)}
\end{equation}
with $l+q=\tau_k$,
where $x^*_{\fV, (\uu_{s_j},\uv_{s_j})}$ denotes the pull-back; 
\begin{equation}\label{case-beta}\nonumber
(\beta) \;\;\;\;\;\; 
L_{\fV, F} =  y_{\fV, (k,0)} +1
\end{equation}
where $y_{\fV, (k,0)}$ is the proper
transform of the local parameter $\de_{(k,0)}$ for $E_{\vt, F_k}$.
\end{enumerate}
\end{thm}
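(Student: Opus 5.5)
The plan is an induction along the block order $\fG_{F_1}<\cdots<\fG_{F_\up}$. We follow a fixed chart $\fV$ of $\tsR_\ell$ down the tower
$$\tsR_\ell=\tsR_{\ell_\up}\lra \tsR_{\wp_\up}\lra\cdots\lra\tsR_{\ell_{k}}\lra\tsR_{\wp_k}\lra\tsR_{\ell_{k-1}}\lra\cdots\lra\tsR_\vt\lra\sR,$$
and track $T^+_{(k,s)}$ and $L_{F_k}$ through the stages. For $\tsR_\vt$ we invoke Corollary \ref{vt-blowup}. For the $\wp$-blowups of block $k$ we use Lemma \ref{wp:key} and Lemma \ref{termi-charts}. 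For the $\ell$-blowup we use Lemma \ref{LF-forms}. The key invariant to propagate is that, once block $k$ has been treated, the shapes of $T^+_{\fV,(k,s)}$ and $L_{\fV,F_k}$ are not altered by any later blowup, up to invertible factors.

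\emph{Part (1).} By Corollary \ref{vt-blowup}(1), on $\tsR_\vt$ we have $T^+_{\fV,(k,s)}=x_{\fV,k}x_{\fV,(k,s)}$. By the pleasantness condition of Definition \ref{stand-models}, $x_k$ occurs only in minus-terms $\bm_{(k',t)}$ with $k'>\Xi\ge k$, and there it appears alone. Also, the $\vr$-variables $x_{(k,s)}$ occur only in $\fG_{F_k}$. Hence, during the $\wp$-blowups of blocks $j<k$ (and their $\ell$-blowups, which only involve $\de_{(j,0)}$ and $L_{F_j}$), the factors of $T^+_{(k,s)}$ are touched only when $x_k$ occurs in a minus-term of $\fG_{F_j}$, which requires $j>\Xi\ge k$, a contradiction. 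So these factors are unchanged when we arrive at block $k$. Proposition-Definition \ref{pleas} records exactly this: the variables stay pleasant.

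Within block $k$, Lemma \ref{wp:key}(1)(2) show that each blowup with center $(y_+,y_-)$, $y_+\in\{x_k,x_{(k,s)}\}$, either replaces $y_+$ by the exceptional $e$ (chart $\xi\ne 0$) or keeps $y_+$ as a proper transform (chart $\eta\neq0$). Lemma \ref{wp:key}(3) keeps $T^+$ square-free. After termination (Lemma \ref{wp:key}(4)), Lemma \ref{termi-charts} lets us restrict to charts where both $\xi,\eta$ are invertible along $\tsV$. On such charts the exceptional factor is identified with the invertible multiple of the proper transform, so $T^+$ collapses to $a\,x_{\fV,(k,s)}$ or $a\,y_{\fV,k}x_{\fV,(k,s)}$, with $a$ a unit and $y_{\fV,k}$ a transform of $x_k$. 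Later blocks $j>k$ can only affect $x_k$ through minus-terms of $\fG_{F_j}$ with $j>\Xi$, and only when $k\le\Xi$. Such a blowup, with center $(y_+,x_k)$ for a $\fG_j$-binomial, replaces $x_k$ by $e$ or by $x_k e$. The termination charts of Lemma \ref{termi-charts} again make the ratio invertible, so $y_{\fV,k}$ remains a transform of $x_k$ times a unit, and $x_{(k,s)}$ is never touched.

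\emph{Part (2).} This is Lemma \ref{LF-forms} at stage $\ell_k$. We must check that the later stages $j>k$ do not modify $L_{F_k}$. The only variables in $L_{F_k}$ are $x_{(k,\cdot)}$ and $\de_{(k,0)}$, which appear in no $\wp_j$- or $\ell_j$-center for $j>k$: the $\wp_j$-centers involve factors of $T^\pm$ of $\fG_{F_j}$, and the $\ell_j$-center involves $\de_{(j,0)}$ and $L_{F_j}$. So forms $(\alpha)$ and $(\beta)$ persist as pull-backs.

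The main obstacle is the persistence claim in part (1) for $y_{\fV,k}$ across later blocks. A center $(y_+,y_-)$ of block $j$ might have $y_-$ equal to the transform of $x_k$, and one must verify that the surviving chart leaves $T^+_{(k,s)}$ of the stated form rather than producing $e\cdot x_k$ with $e$ non-invertible. To handle this, I would first try to argue via Lemma \ref{termi-charts}: since $B_{(j,\cdot)}$ terminates, $\xi$ and $\eta$ are both units on $\tsV$, so $e$ and $e\,x_k$ differ from the new $y_{\fV,k}$ only by a unit, and the new $y_{\fV,k}$ is again a (pleasant) transform of $x_k$.
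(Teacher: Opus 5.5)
Your block-by-block tracking is a reasonable plan. The non-interference step for blocks $j<k$, via the pleasantness condition of Definition~\ref{stand-models}, is correct; it is the reasoning the paper uses in Lemma~\ref{wp:key}(3) for blocks after the current one.

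The gap is in the tool you rely on for the two delicate steps: the collapse of $T^+_{(k,s)}$ inside block $k$, and its persistence under later blocks.

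\begin{itemize}
\item Lemma~\ref{termi-charts} only says that $\xi$ and $\eta$ are units for the single blowup right after which the binomial in question terminates. It gives no control over earlier blowups, and these can leave several extra factors. For instance, suppose a center $(x_k,x_i)$ is blown up for some $B_{(k,t)}$ with $x_i\nmid \bm_{(k,s)}$. Then on the chart $(\eta\neq0)$ one gets $T^+_{(k,s)}=\xi e\,x_{(k,s)}$, where $\xi$ is the proper transform of $x_k$ and $e$ is exceptional, with nothing divided out.
\item Your fix for later blocks (``since $B_{(j,\cdot)}$ terminates, $\xi,\eta$ are units'') has the same defect. It covers only the terminating blowup of $B_{(j,\cdot)}$, not intermediate block-$j$ blowups whose center contains the divisor of $y_{\fV,k}$. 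Such centers also arise in more ways than you list: $y_{\fV,k}$ may be a block-$k$ exceptional variable dividing the transform of some $x_i$ that occurs in $\bm_{(j,t)}$ for an arbitrary $j>k$.
\item In Part (2), the premise that $\de_{(k,0)}$ is not a factor of any later $T^\pm$ is false. On the $\vt_k$-chart $(\eta\neq 0)$ one has $x_k=x_{\fV,k}\de_{(k,0)}$, so $\de_{(k,0)}$ divides $T^-_{(j,t)}$ whenever $x_k\mid\bm_{(j,t)}$.
\end{itemize}

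What closes all of this is the termination of block $k$ itself. Once $B_{(k,s)}$ terminates, $T^+_{\fV,(k,s)}$, and hence each of its factors, is a unit along $\tsV\cap\fV$. So every factor other than the designated ones can be absorbed into $a$. Likewise, after the $\ell_k$-blowup, $y_{\fV,(k,0)}=-1$ along $\tsV$. Consequently any later center containing one of these divisors misses $\tsV\cap\fV$. That blowup is then an isomorphism over a neighbourhood of $\tsV\cap\fV$, and on the chart covering that neighbourhood it only renames the coordinate.

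For comparison, the paper argues pointwise at $\bz\in\tsV_\ell$, splitting on whether $x_{(k,s)}(\bz_0)\neq0$. It uses the order of the $\wp$-blowups to decide which factor of $T^+_{(k,s)}$ enters the terminating center, and applies Lemma~\ref{termi-charts} only at that step. It quotes (2) directly from Lemma~\ref{LF-forms}, leaving persistence under later blocks implicit.

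That case split tells you \emph{which} form occurs for which $s$: the form $a\,y_{\fV,k}x_{\fV,(k,s)}$ arises exactly when $x_{(k,s)}(\bz_0)\neq0$. This matches the $s_i/t_j$ partition of Lemma~\ref{LF-forms}, which the Jacobian computation in case $(\alpha)$ relies on. Your argument, even once repaired, only yields the disjunction. On the other hand, it makes explicit the inter-block non-interference that the paper dismisses as ``clear''.
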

\begin{proof}
We let $\pi: \tsR_{\ell} \lra \sR$ be the induced projection. 

Take any point $\bz \in \tsV_{\ell}$
and let $\bz_0=\pi(\bz) \in \sR$.

(1).

Suppose $x_{ (k,s)} (\bz_0) \ne 0$. 
 We know that {\it there
must be $s_0 \in [\tau_k]$ such that 
$x_{ (k,s_0)} (\bz_0) \ne 0$.}
Then, by the order of $\wp$-blowups, 
the largest non-invertible variable in the plus term $T_B^+$
is $x_k$. Hence, by applying Lemma \ref{termi-charts},
over a chart $\fV$ containing $\bz$, we should obtain
 $T_{\fV, (k,s)}^+= a  y_{\fV, k} x_{\fV, (k,s)}$ 
with $a$ being invertible. 

Suppose $x_{ (k,s)} (\bz_0) = 0$. Then,
by the order of $\wp$-blowups, and again, by applying 
Lemma \ref{termi-charts}, when $B_{(k,s)}$ terminates
at the chart $\fV'$ that $\bz$ lies over, 
$x_{\fV', (k,s)}$ becomes an invertible variable. This implies,
over a chart $\fV$ containing $\bz$, we have
 $T_{\fV, (k,s)}^+=   a  x_{\fV, (k,s)}$ with $a$ being invertible.

It is clear that both $y_{\fV, k}$ and $ x_{\fV, (k,s)}$ 
are pleasant.

(2) follows directly from 
Lemma \ref{LF-forms}.
\end{proof}

\section{Smoothness by Jacobians}\label{jjj}

We follow the noation of Theorem \ref{final-forms} and
 its proof.

As in the proof of Theorem \ref{final-forms},
we fix any chart $\fV$ and consider any point $\bz \in \fV$ lying over a point $\bz_0 \in \sV$.
 We consider $B_{\fV, (k,s)}$ for all $B_{(k,s)} \in \cB^\gov_k$.

\subsection{Jacobian matrix: Case ($\alpha$)} $\ $

In this case, $x_{(k,0)}(\bz_0) \ne 0$,
we can assume that $\fV$ lies over $(x_{(k,0)} \ne 0)$,
thereby $ x_{\fV, (k,0)}=1$.
By Theorem \ref{final-forms} (1) and (2) ($\alpha$), we 
obtain
\begin{eqnarray}\label{finaleq-alpha}  
   B_{\fV, (k,s_i)}: 
\;\; a_i x_{\fV,(k, s_i)} y_{\fV,k} - c_i, \;\; i \in [l]    \\
  B_{\fV, (k,t_i)}: \;\;  b_i x_{\fV,(k, t_i)} - d_i , \;\; i \in [q]. 
\;\;\;\;\;\; \nonumber  \\
\;\;\;\;\;\;\;\;\;\;\;\;\;\;\;
 L_{\fV, F} =1 + \sum_{i=1}^l   x_{\fV, (k, s_i)}
  + \sum_{i=1}^q  e_i  x_{\fV, (k, t_i)} 
\nonumber 
\end{eqnarray}
for some invertible monomials $a_i, b_i, c_i, d_i$, and $e_i$,
where $e_i$ are monomials in exceptional variables such that $e_i(\bz)=0$, for all $i \in [q]$.

 {\it As the chart $\fV$ is fixed and is clear from the context, in the sequel, to save space,
we will selectively drop some subindex $``\ \fV \ "$. For instance, we may write
$y_{k}$ for $y_{\fV,k}$, $x_{(k, s_i)}$ 
for $x_{\fV, (k, s_i)}$, etc., a confusion is unlikely.
}

\smallskip

We  introduce the following maximal minor of the Jacobian 
  $J(\fG_{\fV,F_k})$
$$J^*(\fG_{\fV,F_k})= {{\partial(B_{\fV, s_1} \cdots B_{\fV, s_{\l}}, B_{\fV, t_1} \cdots B_{\fV, t_q},
L_{\fV,F_k})} \over {{\partial(y_{k},
x_{(k,s_1)} \cdots x_{(k,s_l)},
x_{(k, t_1)} \cdots x_{(k, t_q)}
 )}}} .$$
 
 \begin{lemma}\label{jjj-alpha}
The matrix $J^*(\fG_{\fV,F_k})$ is invertible
at the point $\bz$, and all the variables that are used to compute it are pleasant.
\end{lemma}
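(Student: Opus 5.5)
The approach is to compute $J^*(\fG_{\fV,F_k})$ directly from the explicit local forms \eqref{finaleq-alpha}. We arrange its rows in the order $B_{\fV,s_1},\dots,B_{\fV,s_l},B_{\fV,t_1},\dots,B_{\fV,t_q},L_{\fV,F_k}$. We arrange its columns in the order $x_{(k,s_1)},\dots,x_{(k,s_l)},x_{(k,t_1)},\dots,x_{(k,t_q)},y_k$. Then we evaluate at $\bz$ and show that the determinant is a unit times a nonzero expression. Before computing, we need to know which variables the coefficients depend on. By Theorem \ref{final-forms} (1), the variables $y_k$ and $x_{(k,s)}$ are pleasant. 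By construction, $a_i,b_i,c_i,d_i$ and $e_i$ are monomials in the remaining variables. The $\wp$-blowups only transform factors of the terms $T^\pm_B$, and pleasantness (Definition \ref{stand-models} (2), Proposition-Definition \ref{pleas}) keeps $x_k$ and the $\vr$-variables of $\fG_{F_k}$ out of the other blocks. Hence these coefficients do not involve $y_k$ or $x_{(k,s_i)}, x_{(k,t_i)}$, and can be treated as constants when we differentiate. This settles the second assertion, that all the variables used are pleasant.

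Next we compute the entries. The row of $B_{\fV,(k,s_i)}$ is $a_i y_k$ in the column $x_{(k,s_i)}$ and $a_i x_{(k,s_i)}$ in the column $y_k$, and $0$ elsewhere. The row of $B_{\fV,(k,t_i)}$ is $b_i$ in the column $x_{(k,t_i)}$ and $0$ elsewhere. The last row, that of $L_{\fV,F_k}$, is $(1,\dots,1,e_1,\dots,e_q,0)$. Since $e_i(\bz)=0$, at $\bz$ the last row becomes $(1,\dots,1,0,\dots,0,0)$.

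We then expand along the $t$-rows. Each contributes the unit $b_i$, which leaves the $(l+1)\times(l+1)$ arrow-shaped matrix
$$\begin{pmatrix} a_1y_k & & & a_1x_{(k,s_1)}\\ & \ddots & & \vdots\\ & & a_ly_k & a_lx_{(k,s_l)}\\ 1&\cdots&1&0\end{pmatrix}.$$
By the standard bordered-diagonal determinant formula, its determinant is
$$-\prod_{i}a_i \cdot y_k^{\,l-1}\cdot \sum_{i=1}^l x_{(k,s_i)}.$$

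It remains to show this is nonzero at $\bz$. In case $(\alpha)$ the $x_{(k,s_i)}$ are exactly the coordinates with $x_{(k,s_i)}(\bz_0)\neq0$. The equations $B_{\fV,(k,s_i)}$ give $a_ix_{(k,s_i)}y_k=c_i$ with $c_i$ invertible, so $y_k(\bz)\ne0$ and $x_{(k,s_i)}(\bz)\neq0$. Also $L_{\fV,F_k}(\bz)=0$ together with $e_i(\bz)=0$ gives $\sum_i x_{(k,s_i)}(\bz)=-1\neq0$. So the determinant is nonzero. As a check on the degenerate case, if $l=0$ the relation $L$ would read $1=0$ at $\bz$, which is impossible, so $l\ge1$ automatically.

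The main obstacle is the first step: justifying rigorously that the monomials $a_i,b_i,c_i,d_i,e_i$ contain none of the differentiated variables. Otherwise extra terms enter the Jacobian, for instance if $c_i$ involved some $x_{(k,s_j)}$. I would handle this by tracking through Lemma \ref{wp:key}, Lemma \ref{termi-charts} and Theorem \ref{final-forms} (1) that every variable arising in the minus terms $T^-_{\fV,B}$ of block $k$ comes from $\bm_{(k,s)}$ or from exceptional parameters. By the pleasantness condition these never coincide with $x_k$ or with the $\vr$-variables of $\fG_{F_k}$.
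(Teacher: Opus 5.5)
Your proof is essentially the paper's. You evaluate the same minor at $\bz$ using \eqref{finaleq-alpha} and reduce invertibility to $y_k(\bz)\neq 0$ and $\sum_{i=1}^l x_{(k,s_i)}(\bz)=-1$; cofactor expansion plus the bordered-diagonal determinant formula replaces the paper's column operations, and your derivations of $l\ge 1$ and $y_k(\bz)\neq 0$ are more explicit than the paper's.

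One correction: the rows of $B_{\fV,(k,t_i)}$ need not vanish in the $y_k$-column. The paper writes $*$ there, and for instance when $x_k(\bz_0)\neq 0$ no center meets $X_k$ near $\bz$, so $y_k=x_k$ divides every $b_i$. So your claim that none of $a_i,\dots,e_i$ involves $y_k$ is false in general. The fix is to expand along the $x_{(k,t_i)}$-columns instead of the $t$-rows: at $\bz$ each such column has the single nonzero entry $b_i$, because $e_i(\bz)=0$, and your determinant formula is unchanged.
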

 \begin{proof}
 One calculates and finds that at the point $\bz$,
$J^*(\cB^\gov_{\fV,F_k}, L_{\fV,F_k})$ is equal to
\begin{eqnarray} \nonumber
{\footnotesize
\left(
\begin{array}{cccccccccc}
a_1x_{(k,s_1)} & a_1y_{k}   & \cdots & 0 & 0  & \cdots &0 \\
\vdots \\
a_l x_{(k,l)} & 0 &  \cdots & a_l y_{k} & 0 &  \cdots & 0
\\
* & 0 & \cdots & 0 & b_1 & \cdots & 0 \\
\vdots \\
* & 0 & \cdots & 0 & 0 & \cdots & b_q \\
0 &  1&  \cdots  & 1& 0 & \cdots & 0
\end{array}
\right) (\bz).
}
\end{eqnarray}
Recall here that we have $l >0$. 

We can use the last $q$ columns to cancel the entries marked $``* "$ in the first column without
affecting the remaining entries.
  
  Then, multiplying the first column by $-y_k$ ($\ne 0$ at $\bz$), we obtain
\begin{eqnarray} \nonumber
{\footnotesize
\left(
\begin{array}{cccccccccc}
-a_1x_{(k,s_1)}y_k & a_1y_{k}   & \cdots & 0 & 0  & \cdots &0 \\
\vdots \\
- a_l x_{(k, s_l)} y_k & 0 &  \cdots & a_l y_{k} & 0 &
  \cdots & 0\\
0 & 0 & \cdots & 0 & b_1  & \cdots & 0 \\
\vdots \\
0 & 0 & \cdots & 0 & 0 & \cdots & b_q   \\
0 &  1&  \cdots  & 1& 0 & \cdots & 0
\end{array}
\right) (\bz).
}
\end{eqnarray}
Multiplying the $(i+1)$-th column by $x_{(k, s_i)}$ and adding it to the first column 
for all $1\le i\le  l$, 
we obtain
\begin{eqnarray} \nonumber
{\footnotesize
\left(
\begin{array}{cccccccccc}
0 & a_1y_{k}   & \cdots & 0 & 0  & \cdots &0 \\
\vdots \\
0 & 0 &  \cdots & a_l y_{k} & 0 &  \cdots & 0\\
0 & 0 & \cdots & 0 & b_1   & \cdots & 0 \\
\vdots \\
0 & 0 & \cdots & 0 & 0 & \cdots & b_q  \\
\sum_{i=1}^l  x_{(k, s_i)}  &  1&  \cdots  & 1 & 0 & \cdots & 0
\end{array}
\right) (\bz).
}
\end{eqnarray}
But, at the point $\bz$, by \eqref{finaleq-alpha}, we have
$$\sum_{i=1}^l   x_{(k, s_i)} (\bz) = -  1 \ne 0.$$
Thus, we conclude that 
$J^*(\cB^\gov_{\fV,F_k}, L_{\fV,F_k})$ 
is invertible at $\bz$,
and one sees that all the variables used to compute it are pleasant. 
\end{proof}

 \subsection{Jacobian matrix: Case $(\beta)$}
\label{beta} $\ $

In this case, $x_{(k,0)}(\bz_0) = 0$,
we can assume that $\fV$ lies over $(x_{(k,s_0)} \ne 0)$
for some $s_0 \in [\tau_k]$,
thereby $x_{\fV, (k,s_0)}=1$.
By Theorem \ref{final-forms} (1) and (2) ($\beta$), we obtain
\begin{eqnarray}\label{finaleq-beta}  
B_{\fV, s_0}: \;\; a_0 y_{\fV,k} - c_0        \;\;\;\;\;\;\;\;\;\;\;\; \;\;\;\;\;\;\;\;\;\;\;\;\;          \\ 
  B_{\fV, s_i}: \;\; a_i x_{\fV,(k, s_i)} - c_i, \;\; i \in [l]  \;\;\;\;\;\;\;\;\; \nonumber \\
 B_{\fV, t_i}: \;\;  b_i x_{\fV,(k, t_i)} - d_i , \;\; i \in [q]. \;\;\;\;\;\;\;\;  \nonumber \\
L_{\fV, F_k}: \;\; 1 +  y_{\fV, (k,0)}  \;\;\;\;\;\;\;\;\;\;\;\;\;\;\;\;\;\;\;\;\;\;\;\;\;\; \nonumber 
\end{eqnarray}

Now, we  introduce the following maximal minor of the Jacobian $J(\fG_{\fV,F_k})$                     
$$J^*(\fG_{\fV,F_k})= {{\partial(B_{\fV, s_0},
B_{\fV, s_1} \cdots B_{\fV, s_{\l}}, B_{\fV, t_1} \cdots 
B_{\fV, t_q}, L_{\fV, F_k})}
 \over {{\partial(  y_{ k},
x_{(k, s_1)} \cdots x_{(k, s_l)}}},
x_{(k, t_1)} \cdots x_{(k, t_q)},
 y_{\fV, (k, 0)})}. $$

 \begin{lemma}\label{jjj-beta}
The matrix $J^*(\fG_{\fV,F_k})$ is invertible
at the point $\bz$, and all the variables that are used to compute it are pleasant.
\end{lemma}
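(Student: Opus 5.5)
The plan is to compute $J^*(\fG_{\fV,F_k})$ directly from the local equations \eqref{finaleq-beta} and show that, after ordering rows and columns compatibly, it is triangular with invertible diagonal entries at $\bz$. The key input is Theorem \ref{final-forms}. Part (1) gives the shapes of the plus-terms: $a_0 y_k$ for $s_0$, and $a_i x_{(k,s_i)}$ or $b_i x_{(k,t_i)}$ for the others, with $a_i,b_i$ invertible. Part (2)$(\beta)$ gives $L_{\fV,F_k}=1+y_{\fV,(k,0)}$.

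I order the rows as $B_{\fV,s_0}, B_{\fV,s_1},\dots,B_{\fV,s_l}, B_{\fV,t_1},\dots,B_{\fV,t_q}, L_{\fV,F_k}$ and the columns as $y_k, x_{(k,s_1)},\dots,x_{(k,t_q)}, y_{\fV,(k,0)}$. The diagonal entries are then the following.
\begin{itemize}
\item $\partial B_{\fV,s_0}/\partial y_k = a_0 + y_k\,\partial a_0/\partial y_k - \partial c_0/\partial y_k$.
\item $\partial B_{\fV,s_i}/\partial x_{(k,s_i)}$, of the form $a_i+\cdots$.
\item $\partial B_{\fV,t_i}/\partial x_{(k,t_i)}$, of the form $b_i+\cdots$.
\item $\partial L_{\fV,F_k}/\partial y_{\fV,(k,0)}=1$.
\end{itemize}
The last row is $(0,\dots,0,1)$, since $L_{\fV,F_k}$ involves only $y_{\fV,(k,0)}$. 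Expanding along it reduces the problem to the square block formed by the binomials and the pleasant variables $y_k, x_{(k,s_i)}, x_{(k,t_i)}$.

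The main step is to show that this block is, at $\bz$, diagonal (or at least triangular) with diagonal entries $a_0, a_i, b_i$, all invertible. This requires checking the following points.

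\begin{enumerate}
\item The minus-term data $c_0,c_i,d_i$ are monomials in variables other than the pleasant ones $x_k$ (through its transform $y_k$) and $x_{(k,s)}$, $s\in[\tau_k]$. For this I use Definition \ref{stand-models}(1), that $x_k\nmid \bm_{(k,s)}$, together with Proposition-Definition \ref{pleas}. The variables $x_{(k,s)}$, $s\ge 1$, occur in $B_{(k,s)}$ only in its plus-term, since the minus-term is $\bm_{(k,s)}x_{(k,0)}$. I also need to track that the $\wp$-blowups of Definition \ref{subs} do not move $y_k$ or $x_{(k,s)}$ into the minus-terms. Their substitutions $x\leadsto e$, $y\leadsto ye$ only introduce exceptional variables $e$, which are not among the differentiation variables.
\item The coefficients $a_0,a_i,b_i$ are invertible monomials in exceptional or other variables, so they do not depend on the pleasant differentiation variables either. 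Then $\partial B_{\fV,s_0}/\partial y_k=a_0$ and $\partial B_{\fV,s_0}/\partial x_{(k,s_i)}=0$, and similarly for the other rows.
\end{enumerate}

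Once these points are established, $J^*$ at $\bz$ is diagonal with entries $a_0,a_1,\dots,a_l,b_1,\dots,b_q,1$, so its determinant is invertible. Pleasantness of all the variables used comes directly from Theorem \ref{final-forms}(1), which asserts that $y_{\fV,k}$ and $x_{\fV,(k,s)}$ are pleasant. The variable $y_{\fV,(k,0)}$ is the transform of the exceptional parameter $\de_{(k,0)}$ of $E_{\vt,F_k}$. Because it is attached to the $\vt$-blowup along $X_k\cap X_{(k,0)}$ of block $k$, it likewise does not appear in $\fG_{F_j}$ for $j<k$.

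I expect the main obstacle to be step 1: justifying rigorously that no pleasant variable of block $k$ enters $c_0,c_i,d_i$ after all the blowups. If that fails and some entries are nonzero, I would fall back on showing that they lie only above the diagonal in the chosen order, using the $\wp$-blowup order of Definition \ref{ktaumuh}. The determinant would then still equal the product of the invertible diagonal entries.
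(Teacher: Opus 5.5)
Your overall strategy matches the paper's: compute $J^*(\fG_{\fV,F_k})$ at $\bz$ in the stated row and column order, and obtain a triangular matrix with diagonal $a_0,a_1,\dots,a_l,b_1,\dots,b_q,1$. However, the step you build on is false. You claim the binomial block is \emph{diagonal} because $a_0,a_i,b_i$ do not involve the differentiation variables. Right after the $\vt_k$-blowup, on the chart $(\eta\neq 0)$ over $(x_{(k,s_0)}\neq 0)$, one has $B_{\fV,(k,s)}=x_{\fV,k}x_{\fV,(k,s)}-\bm_{\fV,(k,s)}$ for $s\neq s_0$ (Lemma \ref{vtk-blowup}(1)). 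Theorem \ref{final-forms}(1) and its proof keep the form $T^+_{\fV,(k,s)}=a\,y_{\fV,k}x_{\fV,(k,s)}$ whenever $x_{(k,s)}(\bz_0)\neq 0$. For such $s_i$ your coefficient $a_i$ contains $y_k$, so $\partial B_{\fV,s_i}/\partial y_k=a\,x_{(k,s_i)}$, which is nonzero at $\bz$. Hence your point (2) fails, and so does your conclusion that $J^*$ is diagonal. You placed the ``main obstacle'' in the $c$'s, but the genuine off-diagonal entries come from the plus-terms.

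Your fallback does not repair this as stated. With your ordering these entries lie in the first ($y_k$) column, that is, \emph{below} the diagonal, and their position has nothing to do with the $\wp$-blowup order. What gives triangularity, and what the paper's displayed matrix records with its $*$'s in the first column, is the following structure:
\begin{itemize}
\item On the chart $x_{(k,s_0)}=1$, the row of $B_{\fV,s_0}$ is $(a_0,0,\dots,0)$, since it contains no $x_{(k,s)}$ and no $y_{\fV,(k,0)}$.
\item Each $B_{\fV,(k,s)}$ with $s\neq s_0$ involves, among the differentiation variables, only its own $x_{(k,s)}$ and $y_k$. This mirrors the original equations, where $\bm_{(k,s)}$ contains neither $x_k$ nor any $\vr$-variable.
\end{itemize}
So $J^*$ is lower triangular, and its determinant is still $a_0\prod a_i\prod b_i$. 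Since the $a_i$ now contain $y_k$, their invertibility at $\bz$ requires $y_k(\bz)\neq 0$. This follows from the termination of $B_{\fV,s_0}$, whose plus-term $a_0y_k$ is invertible along $\fV\cap\tsV_\ell$. With this correction your argument goes through. Your handling of the last row (valid since $L_{\fV,F_k}=1+y_{\fV,(k,0)}$) and of pleasantness is fine.
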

\begin{proof}
Then, one calculates and finds that at the point $\bz$, it is equal to
\begin{eqnarray} \nonumber
\left(
\begin{array}{cccccccccc}
 a_0 & 0 & \cdots   & 0  & 0   & \cdots & 0 & 0\\
*  &  a_1  & \cdots &   0  & 0 & \cdots & 0 & 0\\
\vdots \\
*  & 0 & \cdots & a_l   & 0   & \cdots & 0 & 0       \\
*  & 0&  \cdots & 0 & b_1 & \cdots  &0     & 0\\
\vdots \\
*  &  0 & \cdots & 0 & 0 & \cdots & b_l         & 0     \\
 0 & * & \cdots & *&  * & \cdots  & * &    1
\end{array}
\right) (\bz).
\end{eqnarray}
Thus, we conclude that 
$J^*(\fG_{\fV,F_k})$ 
is invertible at $\bz$, and all the variables that are used to compute it are pleasant.
 \end{proof}
 
\subsection{Conclusion}

\begin{thm}\label{tsV=smooth} 
The final scheme $\tsV_\ell$ is smooth.
In particular, on any chart $\fV$, $\tsV_\ell \cap \fV$ 
is defined by $\fG_\fV$.
\end{thm}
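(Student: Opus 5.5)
The plan is to combine the block-by-block Jacobian computations of Lemmas \ref{jjj-alpha} and \ref{jjj-beta} into one global Jacobian criterion on each chart $\fV$ of $\tsR_\ell$. First I would fix a chart $\fV$ and a point $\bz\in\tsV_\ell\cap\fV$ over $\bz_0\in\sV$. I would then consider the closed subscheme $Z_\fV\subset\fV$ cut out by the proper transforms $\fG_\fV=\bigsqcup_k \fG_{\fV,F_k}$, that is, by all $L_{\fV,F_k}$ and all $B_{\fV,(k,s)}$. Since $\tsV_\ell\cap\fV$ is contained in $Z_\fV$, it suffices to show two things: that $Z_\fV$ is smooth of dimension $\dim\fV-|\fG_\fV|$ at $\bz$, and that $Z_\fV$ and $\tsV_\ell\cap\fV$ coincide near $\bz$.

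For smoothness I would build a square submatrix of the full Jacobian $J(\fG_\fV)$, with rows indexed by $\fG_{\fV,F_1},\dots,\fG_{\fV,F_\up}$ in order. The columns are the union over $k$ of the variable sets used in $J^*(\fG_{\fV,F_k})$: these are $y_k$ together with the $x_{(k,s)}$ in Case $(\alpha)$, and additionally $y_{\fV,(k,0)}$ in Case $(\beta)$, with $x_{(k,s_0)}$ dropped. Two properties of these column sets are needed. The sets for distinct $k$ must be disjoint, which is clear since the $\vr$-variables carry the index $k$, and the transforms $y_k$ of $x_k$ are distinct. More importantly, the matrix must be block lower-triangular. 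Here pleasantness enters, via Proposition-Definition \ref{pleas} and Theorem \ref{final-forms}(1): the variables $y_k$ and $x_{(k,s)}$ do not occur in $\fG_{\fV,F_j}$ for $j<k$. Hence the entries of rows of block $j$ in columns of block $k>j$ vanish. The diagonal blocks are exactly $J^*(\fG_{\fV,F_k})(\bz)$, which are invertible by Lemmas \ref{jjj-alpha} and \ref{jjj-beta}. So the whole minor is invertible at $\bz$. By the Jacobian criterion, $Z_\fV$ is then smooth at $\bz$ of codimension $|\fG_\fV|=\sum_k(\tau_k+1)$ in $\fV$.

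To finish, I would compare dimensions. The count gives $\dim\fV-\sum_k(\tau_k+1)=\dim\sR-\up-\sum_k\tau_k=N-\up=\dim\sV=\dim\tsV_\ell$, since $\dim \sR=N+\sum_k\tau_k$ and the blowups preserve dimension. Thus near $\bz$, $Z_\fV$ is irreducible of the same dimension as $\tsV_\ell$ and contains it, so $\tsV_\ell\cap\fV$ is a union of components of $Z_\fV$ through $\bz$. Smoothness then forces equality near $\bz$, since a smooth local scheme is integral. Being reduced, $\tsV_\ell\cap\fV$ therefore equals $Z_\fV$ scheme-theoretically near $\bz$. This proves that $\tsV_\ell$ is smooth and that $\fG_\fV$ defines it on $\fV$. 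Implicitly, the relations of $\cB^{\ngv}$ are redundant locally, consistent with Corollary \ref{vt-blowup}(2).

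The main obstacle is the block-triangularity, i.e.\ verifying pleasantness in the final charts. The variables $y_k$ and $x_{(k,s)}$ have undergone many $\wp$- and $\ell$-blowups, and the binomials of earlier blocks could, after substitutions $x\leadsto e$, $y\leadsto ye$, acquire the new exceptional variables. I would handle this by noting that the Jacobian columns of block $k$ are only $y_k$, the $x_{(k,s)}$, and $y_{\fV,(k,0)}$, never exceptional variables. Pleasantness of $y_k$ and $x_{(k,s)}$ is asserted in Theorem \ref{final-forms}(1), and $y_{\fV,(k,0)}$ appears only in $L_{\fV,F_k}$. Any dependence of earlier rows on later-block variables would have to come from $\bm_{(j,s)}$ containing $x_k$ with $j<k$, which the pleasantness condition of Definition \ref{stand-models} excludes. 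A secondary point to check is that on each chart there really is a $k$-uniform choice of $(\alpha)$ versus $(\beta)$. Theorem \ref{final-forms}(2) supplies this.
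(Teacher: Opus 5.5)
Your proposal is correct and follows essentially the same route as the paper. Both arguments use the same block lower-triangular minor of $\Jac(\fG_\fV)$, with zeros above the diagonal coming from pleasantness and diagonal blocks invertible by Lemmas \ref{jjj-alpha} and \ref{jjj-beta}, followed by the same count $\dim\tsR-(\up+\sum_k\tau_k)=N-\up=\dim\tsV$.

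The only difference is in the last step. The paper bounds the Zariski tangent space of $\tsV_\ell$ at $\bz$ directly. You instead show that the scheme cut out by $\fG_\fV$ is smooth of that dimension and then compare via irreducibility, which makes the ``defined by $\fG_\fV$'' conclusion a bit more explicit. One small point: you only need the block-$k$ column variables to be absent from the rows of earlier blocks. Your claim that $y_{\fV,(k,0)}$ occurs only in $L_{\fV,F_k}$ is more than the argument requires.
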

\begin{proof} 
We follow the earlier notation.

At any point $\bz \in \fV$, 
we  obtain the  following minor of  $\Jac (\fG_\fV)$
 \begin{equation}\label{the-grand-matrix}  
J^*(\fG_\fV)=\left(
\begin{array}{cccccccccc}
 J^*(\fG_{\fV,F_1})  & 0 & 0& \cdots & 0  \\
 * & J^*(\fG_{\fV,F_2}) &  0 & \cdots & 0  \\
\vdots &    \\
 * &  * & * & \cdots & J^*(\fG_{\fV, F_\up}) \\
 \end{array}
\right)
\end{equation}
where
all the blocks along diagonal are invertible at $\bz$.
Observe that 
$$\rk (J^*(\fG_{\fV}))=\sum_{k=1}^\up
\rk (J^*(\fG_{\fV,F_k}) = \up+ \sum_{k=1}^\up \tau_k.$$
Then,
$$\dim_\bz T_{\tsV_\ell} \le \dim \tsR - \rk J^*(\fG_\fV) $$
$$= \dim \bU + \sum_{k=1}^\up \tau_k -
(\up+ \sum_{k=1}^\up \tau_k)$$
$$= \dim \bU - \up = \dim \bV =\dim \tsV.$$
Hence, $\tsV_\ell$ is smooth, and consequently,
over any chart $\fV$, it is  defined  by 
$$L_{\fV,\sF}, \; \cB_\fV^\gov,$$  implying that all the relations in $\cB^\ngv$ are dependent.
\end{proof}

As all universal blowups are along codimention two loci, using the same calculations, it is routine to show that
any singular stratum $Z_\Ga$ admits a smooth alteration.

{\scriptsize
\tableofcontents
}

\end{document}